\newtheorem{theo}{Theorem}[section]
\newtheorem{prop}[theo]{Proposition}
\newtheorem{lem}[theo]{Lemma}
\newtheorem{exa}[theo]{Example}
\newtheorem{rem}[theo]{Remark}
\newtheorem{defi}[theo]{Definition}
\newtheorem{ques}[theo]{Question}
\newcommand{\bp}{\begin{proof}}
\newcommand{\ep}{\end{proof}}
\begin{document}
 \setlength{\baselineskip}{13pt} \pagestyle{myheadings}

 \title{
 {Primitive Cohomology of Real Degree Two on Compact Symplectic Manifolds}
 \thanks{
 Supported by NSFC (China) Grants 11071208, 11371309 and the Postgraduate Innovation Project of Jiangsu Province (NO.CXZZ13$_{-}$0888).}
 }
 \author{{\large Qiang Tan, Hongyu Wang\thanks {E-mail:
 hywang@yzu.edu.cn }, Jiuru Zhou}\\
 }
 \date{}
 \maketitle

 \noindent {\bf Abstract:}
 In this paper, we define the generalized Lejmi's $P_J$ operator on a compact almost K\"{a}hler $2n$-manifold. We get that $J$ is $C^\infty$-pure and full if $\dim\ker P_J=b^2-1$.
 Additionally, we investigate the relationship between $J$-anti-invariant cohomology introduced by T.-J. Li and W. Zhang
 and new symplectic cohomologies introduced by L.-S. Tseng and S.-T. Yau on a closed symplectic $4$-manifold.
 \\

 \noindent {{\bf AMS Classification (2000):} 53C55, 53C22.}\\

 \noindent {{\bf Keywords:} $J$-anti-invariant cohomology,
             symplectic cohomology, primitive cohomology, $\omega$-compatible almost complex structure.}

 \section{Introduction}

 For an almost complex manifold $(M,J)$,
 T.-J. Li and W. Zhang \cite{LiZhC} introduced subgroups, $H_{J}^{+}$ and $H_{J}^{-}$, %
 of the real degree 2 de Rham cohomology group $H^2_{dR}(M,\mathbb R)$, as the sets of cohomology classes %
 which can be represented by $J$-invariant and $J$-anti-invariant real $2$-forms, respectively. %
 Let us denote by $h_J^{+}$ and $h_J^{-}$ the dimensions of $H_{J}^{+}$ and $H_{J}^{-}$, respectively.

 It is interesting to consider whether or not the subgroups $H_{J}^{+}$ and $H_{J}^{-}$ %
 induce a direct sum decomposition of $H^2_{dR}(M,\mathbb R)$.
 In the case of direct sum decomposition, $J$ is said to be $C^\infty$ pure and full.
 This is known to be true for integrable almost complex structures $J$
 which admit compatible K\"{a}hler metrics on compact manifolds of any dimension.
 In this case, the induced decomposition is nothing but the classical real
 Hodge-Dolbeault decomposition of $H^2_{dR}(M,\mathbb R)$ (see \cite{Barth}).

 In dimension 4, T. Draghici, T.-J. Li and W. Zhang \cite[Theorem 2.3]{DLZS}
 proved that on any closed almost complex $4$-manifold $(M,J)$, $J$ is $C^\infty$ pure and full.
 Further in \cite{DLZO}, they computed the subgroups $H_{J}^{+}$ and $H_{J}^{-}$
 and their dimensions $h_J^{+}$ and $h_J^{-}$ for almost complex structures metric related to an integrable one.

 \vskip 6pt

 In the fifth section of \cite{LejS1},
 Lejmi introduced the differential operator $P_J$ on a compact almost K\"{a}hler $4$-manifold $(M,g,J,\omega)$,
 \begin{eqnarray}
   P_J: \Omega^2_0 & \rightarrow & \Omega^2_0 \nonumber \\
        \psi & \mapsto & \frac{1}{2}\Delta_g\psi-\frac{1}{4}g(\Delta_g\psi,\omega)\omega. \nonumber
  \end{eqnarray}
 He proved that $P_J$ is a self-adjoint strongly elliptic linear operator of order $2$.
 In this paper, we define the generalized operator $P_J$ on a compact almost K\"{a}hler $2n$-manifold.
 We prove that $P_J$ is also a self-adjoint strongly elliptic linear operator on a compact almost K\"{a}hler manifold $(M,g,J,\omega)$
 of dimension $2n$.

 \vskip 6pt

 \noindent {\bf Proposition 2.3.}
  {\it Suppose that $(M,g,J,\omega)$ is a closed almost K\"{a}hler $2n$-manifold,
  then $\ker P_J=\mathcal{H}^-_J\oplus\mathcal{H}^+_{J,0}$ and the harmonic representatives of $H^-_J$ and $H^+_{J,0}$ are of pure degree,
  that is,
  $$H^-_J\cong\mathcal{H}^-_J,\,\,\, H^+_{J,0}\cong\mathcal{H}^+_{J,0}.$$
  }

 \vskip 6pt

 By studying the properties of $P_J$, we get that $J$ is $C^\infty$ pure and full when $\dim\ker P_J=b^2-1$.

 \vskip 6pt

 \noindent {\bf Theorem 2.5.}
 {\it Suppose that $(M,g,J,\omega)$ is a closed almost K\"{a}hler $2n$-manifold,
  if $\dim\ker P_J=b^2-1$, then $J$ is $C^\infty$ pure and full and
  \begin{eqnarray*}
     H^2_{dR}(M;\mathbb{R})&=&  H^+_J\oplus H^-_J \\
     &=&  Span_{\mathbb{R}}\{\omega\}\oplus H^+_{J,0}\oplus H^-_J\\
     &=&  H^{(1,0)}_\omega(M;\mathbb{R})\oplus H^{(0,2)}_\omega(M;\mathbb{R}).
  \end{eqnarray*}
  Moreover, $J$ is pure and full.}

 \vskip 6pt

 Recently, L.-S. Tseng and S.-T. Yau \cite{TY} introduced new cohomologies for a closed symplectic manifold $M$.
 On a compact symplectic manifold $(M,\omega)$ of dimension $2n$, the symplectic star operator $*_s$ acts on a differential
 $k$-form $\alpha$ by
 \begin{eqnarray*}
 \alpha\wedge*_s\alpha' &=&(\omega^{-1})^k(\alpha,\alpha')d{\rm vol} \\
 &=& \frac{1}{k!}(\omega^{-1})^{i_1j_1}\cdot\cdot\cdot(\omega^{-1})^{i_kj_k}\alpha_{i_1\cdot\cdot\cdot i_k}\alpha'_{j_1\cdot\cdot\cdot j_k}\frac{\omega^n}{n!}
 \end{eqnarray*}
 with repeated indices summed over.
 Note that $*_s*_s=1$, which follows from Weil's identity \cite{GuiS,WeiI}
  \begin{equation*}
     *_s\frac{L^r}{r!}B_k=(-1)^{k(k+1)/2}\frac{L^{n-r-k}}{(n-r-k)!}B_k
  \end{equation*}
  for any primitive $k$-form $B_k$.
  Also by Weil's identity, for any primitive $k$-form $B_k$, we can get
  \begin{equation}\label{Weil's relation}
     *_g\frac{L^r}{r!}B_k=(-1)^{k(k+1)/2}\frac{L^{n-r-k}}{(n-r-k)!}\mathcal{J}(B_k),
  \end{equation}
  where $\mathcal{J}=\sum_{p,q}(\sqrt{-1})^{p-q}\Pi^{p,q}$
  projects a $k$-form onto its $(p,q)$ parts times the multiplicative factor $(\sqrt{-1})^{p-q}$.
 The adjoint of the standard exterior derivative takes the form
 $$
 d^\Lambda=(-1)^{k+1}*_sd*_s.
 $$
 By using the properties $d^2=(d^\Lambda)^2=0$ and the anti-commutively $dd^\Lambda=-d^\Lambda d$,
 Tseng and Yau \cite{TY} considered new symplectic cohomology groups $H^k_{d+ d^\Lambda}(M)$ and $H^k_{dd^\Lambda}(M)$.
 They also proved that the space of $d+d^\Lambda$-harmonic $k$-forms $\mathcal{H}^k_{d+d^\Lambda}(M)$ and
 the space of $dd^\Lambda$-harmonic $k$-forms $\mathcal{H}^k_{dd^\Lambda}(M)$ are finite dimensional and
 isomorphic to $H^k_{d+d^\Lambda}(M)$ and $H^k_{dd^\Lambda}(M)$, respectively.

 By considering the relationship between $H^-_J(\cong\mathcal{H}_J^-)$ and symplectic cohomology groups on a closed almost K\"{a}hler $4$-manifold,
 we obtain the following theorem.

 \vskip 6pt

 \noindent {\bf Theorem 3.2.}
   {\it Suppose that $(M,g,J,\omega)$ is a closed almost K\"{a}hler $2n$-manifold, then
   $$\ker P_J=\mathcal{H}^-_{d+d^\Lambda}(M)\cap \mathcal{H}^-_{dd^\Lambda}(M).$$
   If $\mathcal{H}^-_{d+d^\Lambda}(M)=\mathcal{H}^-_{dd^\Lambda}(M)$, then
   $$\mathcal{H}^2_{d+d^\Lambda}(M)=\mathcal{H}^2_{dd^\Lambda}(M)=Span_{\mathbb{R}}\{\omega\}\oplus \mathcal{H}^-_J\oplus\mathcal{H}^+_{J,0}.$$
   In particular, if $n=2$, $$\mathcal{H}^-_{d+d^\Lambda}(M)=\mathcal{H}^-_J\oplus\mathcal{H}_g^-\oplus(\mathcal{H}^-_J\oplus\mathcal{H}_g^-)^{-,\perp}_{d+d^\Lambda},$$
   $$\mathcal{H}^-_{dd^\Lambda}(M)=\mathcal{H}^-_J\oplus\mathcal{H}_g^-\oplus(\mathcal{H}^-_J\oplus\mathcal{H}_g^-)^{-,\perp}_{dd^\Lambda},$$
   $$*_g(\mathcal{H}^-_J\oplus\mathcal{H}_g^-)^{-,\perp}_{d+d^\Lambda}=(\mathcal{H}^-_J\oplus\mathcal{H}_g^-)^{-,\perp}_{dd^\Lambda}.$$
   }

 \section{Primitive de Rham cohomology of degree two}\setcounter{equation}{0}

  An almost K\"{a}hler structure on a real manifold $M$ of dimension $2n$ is given by a triple $(g,J,\omega)$ of
  a Riemannian metric $g$, an almost complex structure $J$ and a symplectic form $\omega$, which satisfies the compatibility
  relation
  \begin{equation}\label{compatible metric}
    g(\cdot,\cdot)=\omega(\cdot,J\cdot).
  \end{equation}
  We say that the almost complex structure $J$ is $\omega$ compatible if it induces a Riemannian metric via (\ref{compatible metric}).

  Suppose that $(M,g,J,\omega)$ is a closed almost K\"{a}hler $2n$-manifold.
  The almost complex structure $J$ acts on the space $\Omega^2$ of smooth $2$-forms on $M$ as an involution by
  \begin{equation}\label{involution}
    \alpha \longmapsto \alpha(J\cdot,J\cdot), \quad \alpha\in\Omega^2(M).
  \end{equation}
  This gives the $J$-invariant, $J$-anti-invariant decomposition of $2$-forms (see \cite{DonT}):
  \begin{equation*}
  \Omega^2 = \Omega^+_J \oplus \Omega^-_J, \quad \alpha = \alpha_J^+ + \alpha_J^-
  \end{equation*}
  as well as the splitting of corresponding vector bundles
  \begin{equation}\label{J decomposition}
  {\Lambda}^2={\Lambda}_J^+ \oplus {\Lambda}_J^-.
  \end{equation}
  Let $\mathcal Z^2$ denote the space of closed $2$-forms on $M$ and set
  \begin{align*}
  \mathcal Z_J^+ \triangleq \mathcal Z^2 \cap \Omega_J^+, \quad
  \mathcal Z_J^- \triangleq \mathcal Z^2 \cap \Omega_J^-.
  \end{align*}
  Define the $J$-invariant and $J$-anti-invariant cohomology subgroups $H_J^{\pm}$ (see \cite{LiZhC}) by
  \begin{align*}
  H^\pm_J=\{\mathfrak a \in H^2_{dR}(M;\mathbb{R}) \mid
           \mbox{there exists} \ \alpha \in \mathcal Z_J^{\pm} \ \mbox{such that} \ \mathfrak a = [\alpha]\}.
  \end{align*}
  Let us denote by $h_J^{+}$ and $h_J^{-}$ the dimensions of $H_{J}^{+}$ and $H_{J}^{-}$, respectively.
  We say $J$ is $C^\infty$ pure if $H^+_J\cap H^-_J=\{0\}$, $C^\infty$ full if $H^+_J+H^-_J=H^2_{dR}(M,\mathbb{R})$,
  and $J$ is $C^\infty$ pure and full if
  $$H^2_{dR}(M,\mathbb{R})=H^+_J\oplus H^-_J.$$
  T. Draghici, T.-J. Li and W. Zhang have proved that for any closed almost complex $4$-manifold $(M,J)$,
  $J$ is $C^\infty$ pure and full (see \cite{DLZS}).

  On a smooth closed manifold $M$, the space $\Omega^*(M)$ of smooth forms is a vector space,
  and with $C^\infty$ topology, it is a Fr\'{e}chet space.
  The space $\mathcal{E}_*(M)$ of currents is the topological dual space, which is also a Fr\'{e}chet space (see \cite{GH,LinS}).
  As a topological vector space, $\Omega^*(M)$ is reflexive, thus it is also the dual space of $\mathcal{E}_*(M)$.
  Denote the space of closed currents by $\mathcal{Z}_*(M)$ and the space of boundaries by $\mathcal{B}_*(M)$.
  On a closed almost complex manifold $(M,J)$, there is a natural action of $J$ on the space $\Omega^k(M)_{\mathbb{C}}\triangleq\Omega^k(M)\otimes\mathbb{C}$, which induces a topological type decomoposition
  $$\Omega^k(M)_{\mathbb{C}}=\bigoplus_{p+q=k}\Omega^{p,q}_J(M)_{\mathbb{C}}.$$
  If $k$ is even, $J$ also acts on $\Omega^k(M)$ as an involution.
  Specifically, if $k=2$, $J$ acts on $\Omega^2(M)$ as (\ref{involution}) and decomposes it into the topological direct sum of the invariant part $\Omega^+_J$ and the anti-invariant part $\Omega^-_J$.
  In this case, the two decompositions are related in the following way:
  $$\Omega^+_J(M)=\Omega^{1,1}_J(M)_{\mathbb{R}}\triangleq\Omega^{1,1}_J(M)_{\mathbb{C}}\cap\Omega^2(M),$$
  $$\Omega^-_J(M)=\Omega^{(2,0),(0,2)}_J(M)_{\mathbb{R}}\triangleq(\Omega^{(2,0)}_J(M)_{\mathbb{C}}\oplus\Omega^{(2,0)}_J(M)_{\mathbb{C}})
    \cap\Omega^2(M).$$
  For the space of real $2$-currents, we have a similar decomposition
  $$\mathcal{E}_2(M)=\mathcal{E}^J_{1,1}(M)_{\mathbb{R}}\oplus\mathcal{E}^J_{(2,0),(0,2)}(M)_{\mathbb{R}},$$
  and the corresponding subspaces of closed and boundary currents,
  $$\mathcal{B}^J_{1,1}\subset\mathcal{Z}^J_{1,1}\subset\mathcal{E}^J_{1,1}(M)_{\mathbb{R}},$$
  $$\mathcal{B}^J_{(2,0),(0,2)}\subset\mathcal{Z}^J_{(2,0),(0,2)}\subset\mathcal{E}^J_{(2,0),(0,2)}(M)_{\mathbb{R}}.$$
  We note the dual space of $\mathcal{E}^J_{1,1}(M)_{\mathbb{R}}$ is $\Omega^{1,1}_J(M)_{\mathbb{R}}$, and vice versa.
  Similarly, $\mathcal{E}^J_{(2,0),(0,2)}(M)_{\mathbb{R}}$ is the dual space of $\Omega^{(2,0),(0,2)}_J(M)_{\mathbb{R}}$.
  If $S=(1,1)$ or $(2,0),(0,2)$ (cf.{\cite{FTO,LiZhC}}), define
  $$H^J_S(M)_\mathbb{R}=\frac{\mathcal{Z}^J_S}{\mathcal{B}^J_S}.$$
  $J$ is said to be pure if
  $$\frac{\mathcal{Z}^J_{1,1}}{\mathcal{B}^J_{1,1}}\cap\frac{\mathcal{Z}^J_{(2,0),(0,2)}}{\mathcal{B}^J_{(2,0),(0,2)}}=0.$$
  $J$ is said to be full if
  $$\frac{\mathcal{Z}_2}{\mathcal{B}_2}=\frac{\mathcal{Z}^J_{1,1}}{\mathcal{B}^J_{1,1}}+
  \frac{\mathcal{Z}^J_{(2,0),(0,2)}}{\mathcal{B}^J_{(2,0),(0,2)}}.$$
  Therefore, an almost complex structure $J$ is pure and full if and only if
  \begin{equation}\label{pure full}
   H_2(M;\mathbb{R})=H^J_{1,1}(M)_\mathbb{R}\oplus H^J_{(2,0),(0,2)}(M)_\mathbb{R},
  \end{equation}
  where $H_2(M;\mathbb{R})$ is the $2$-nd de Rham homology group.

  In particular, if $(M,g,J,\omega)$ is a closed almost K\"{a}hler $4$-manifold,
  then the Hodge star operator $*_g$ gives
  the well-known self-dual, anti-self-dual decomposition of $2$-forms:
  \begin{equation*}
  \Omega^2 = \Omega_g^+ \oplus \Omega_g^-
  \end{equation*}
  as well as the corresponding splitting of the bundles (see \cite{DonT, DKT})
  \begin{equation}\label{g decomposition}
  {\Lambda}^2 = {\Lambda}_g^+ \oplus {\Lambda}_g^-.
  \end{equation}
  The Hodge-de Rham Laplacian commutes with $*_g$,
  so the decomposition (\ref{g decomposition}) holds for the space $\mathcal {H}^2_g$ of harmonic $2$-forms as well.
  By Hodge theory, this induces cohomology decomposition by the metric $g$:
  \begin{align}\label{g harmonic decomposition}
  H^2_{dR}(M;\mathbb{R})\cong\mathcal {H}^2_g=\mathcal {H}_g^+\oplus\mathcal
  {H}_g^-.
  \end{align}
  One defines (see \cite{DKT})
  \begin{align*}
  H_g^{\pm} = \{ \mathfrak a \in H^2_{dR}(M;\mathbb{R}) \mid
             \mathfrak a = [\alpha] \,\; \mbox{for some} \,\;\alpha \in \mathcal Z_g^{\pm}:=\mathcal Z^2\cap\Omega_g^{\pm} \}. %
  \end{align*}
  It is easy to see that
  \begin{align*}
  H_g^\pm \cong \mathcal Z_g^{\pm} = \mathcal {H}_g^\pm
  \end{align*}
  and (\ref{g harmonic decomposition}) can be written as
  \begin{align*}
  H^2_{dR}(M;\mathbb{R}) = H_g^+ \oplus H_g^-.
  \end{align*}

  There are the following relations between the decompositions \eqref{J decomposition} and \eqref{g decomposition}
  on an almost K\"{a}hler $4$-manifold (cf. \cite{DonT,DKT}):
  \begin{align*}
  & {\Lambda}_J^+=\langle\omega\rangle\oplus{\Lambda}_g^-,  \\
  & {\Lambda}_g^+=\langle\omega\rangle\oplus{\Lambda}_J^-, \\
  & {\Lambda}_J^+\cap{\Lambda}_g^+=\langle\omega\rangle, \ \ \ \ {\Lambda}_J^-\cap{\Lambda}_g^-=\{0\}.
  \end{align*}
  It is easy to see that
  $\mathcal{Z}_J^-\subset \mathcal {H}_g^+$ and $H_g^-\subset H_J^+$.
  Let $b^2$, $b^{+}$ and $b^{-}$  be the second, the self-dual and the anti-self-dual Betti number of $M$, respectively. Thus $b^2=b^{+}+b^{-}$.
  It is easy to see that, for a closed almost K\"{a}hler $4$-manifold $(M,g,J,\omega)$, there hold (see \cite{DLZS, DLZO,TWZZ}):
  \begin{align}\label{dimension}
  H_J^- \cong \mathcal Z_J^-, \quad
  h_J^+ + h_J^- = b^2, \quad
  h_J^+ \geq b^-+1, \quad 0\leq h_J^- \leq b^+-1.
  \end{align}

  \vskip 6pt

  Suppose that $(M,g,J,\omega)$ is a closed almost K\"{a}hler $2n$-manifold.
  A differential $k$-form $B_k$ with $k\leq n$ is called primitive if $L^{n-k+1}B_k=0$ (see \cite{TY,YanH}).
  Here $L$ is the Lefschetz operator (see \cite{BrA,TY,YanH}) which is defined acting on a $k$-form $A_k\in\Omega^k(M)$ by
  $$L(A_k)=\omega\wedge A_k.$$
  Define the space of primitive $k$-forms by $\Omega^k_0$.
  Specifically,
  \begin{equation*}
    \Omega^2_0=\{\alpha\in\Omega^2:\omega^{n-1}\wedge\alpha=0\}.
  \end{equation*}
  Therefore,
  \begin{equation*}
    \Omega^2=\Omega^2_1\oplus\Omega^2_0,
  \end{equation*}
  where $\Omega^2_1\triangleq\{f\omega: f\in C^\infty(M) \}$.
  It is easy to see that $\Omega^-_J\subset\Omega^2_0$.
  So we can get the following decomposition
  \begin{equation}
    \Omega^2_0=\Omega^+_{J,0}\oplus\Omega^-_J,
  \end{equation}
  where $\Omega^+_{J,0}$ is the space of the primitive $J$-invariant $2$-forms.
  We consider the following second order linear differential operator on $\Omega^2_0$.
  \begin{eqnarray}
   P_J: \Omega^2_0 & \rightarrow & \Omega^2_0 \nonumber \\
        \psi & \mapsto & \Delta_g\psi-\frac{1}{n}g(\Delta_g\psi,\omega)\omega, \nonumber
  \end{eqnarray}
  where $\Delta_g$ is the Riemannian Laplacian with respect to the metric $g(\cdot,\cdot)=\omega(\cdot,J\cdot)$ (here we use the convention $g(\omega,\omega)=n$).
  \begin{lem}\label{P lemma}
  $P_J$ is a self-adjoint strongly elliptic linear operator with kernel the primitive $g$-harmonic $2$-forms.
  \end{lem}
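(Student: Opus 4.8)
The plan is to recognize $P_J$ as the composition of the Hodge--de Rham Laplacian with the pointwise orthogonal projection onto primitive $2$-forms, and then to read off all four claimed properties from this single description. Write $\pi_1\colon\Omega^2\to\Omega^2_1$ for the orthogonal projection onto the line $\{f\omega\}$, which under the normalization $g(\omega,\omega)=n$ is given by $\pi_1(\alpha)=\frac{1}{n}g(\alpha,\omega)\omega$ (the scalar $\frac{1}{n}g(\alpha,\omega)$ being determined by $\langle\alpha-\pi_1(\alpha),\omega\rangle=0$), and let $\pi_0=\mathrm{id}-\pi_1$ be the complementary projection onto $\Omega^2_0$. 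Then for $\psi\in\Omega^2_0$ one has $P_J\psi=\Delta_g\psi-\pi_1(\Delta_g\psi)=\pi_0(\Delta_g\psi)$, so that $P_J=\pi_0\circ\Delta_g\circ\iota$, where $\iota\colon\Omega^2_0\hookrightarrow\Omega^2$ is the inclusion. In particular $P_J$ takes values in $\Omega^2_0$, so it is well defined, and it is manifestly linear.

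For self-adjointness I would use that $\Delta_g$ is self-adjoint on $\Omega^2$ and that $\pi_0$ is a pointwise orthogonal (hence self-adjoint) projection. For $\psi,\phi\in\Omega^2_0$, the identities $\pi_0\psi=\psi$ and $\pi_0\phi=\phi$ give $\langle P_J\psi,\phi\rangle=\langle\pi_0\Delta_g\psi,\phi\rangle=\langle\Delta_g\psi,\pi_0\phi\rangle=\langle\Delta_g\psi,\phi\rangle$, and symmetrically $\langle\psi,P_J\phi\rangle=\langle\psi,\Delta_g\phi\rangle$; the two sides agree because $\Delta_g$ is self-adjoint, so $P_J$ is formally self-adjoint. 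For strong ellipticity I would compute the principal symbol: since $\pi_0$ is a bundle map (order zero) and $\Delta_g$ has symbol $\sigma_\xi(\Delta_g)=|\xi|^2\,\mathrm{id}$ on $2$-forms, the symbol of the composition is $\sigma_\xi(P_J)=\pi_0\circ(|\xi|^2\,\mathrm{id})\circ\iota=|\xi|^2\,\pi_0\iota=|\xi|^2\,\mathrm{id}_{\Omega^2_0}$, using $\pi_0\iota=\mathrm{id}$ on primitive forms. This is positive definite for $\xi\neq 0$, which is precisely strong ellipticity.

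For the kernel, the easy inclusion is that any primitive $g$-harmonic $2$-form $\psi$ satisfies $\Delta_g\psi=0$, whence $P_J\psi=\pi_0(0)=0$. For the converse, suppose $\psi\in\Omega^2_0$ with $P_J\psi=0$. Pairing against $\psi$ in $L^2$ and again using $\pi_0\psi=\psi$ yields $0=\langle P_J\psi,\psi\rangle=\langle\pi_0\Delta_g\psi,\psi\rangle=\langle\Delta_g\psi,\psi\rangle=\|d\psi\|^2+\|d^*\psi\|^2$, so $d\psi=d^*\psi=0$ and $\psi$ is $g$-harmonic. Hence $\ker P_J$ is exactly the space of primitive $g$-harmonic $2$-forms.

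I expect the only points needing genuine care to be the bookkeeping around the projection and the symbol: namely verifying that subtracting the $\omega$-component really is the orthogonal projection $\pi_0$ under the convention $g(\omega,\omega)=n$, and that post-composing the second-order operator $\Delta_g$ with the order-zero map $\pi_0$ does not degrade the principal symbol, which hinges on the identity $\pi_0\iota=\mathrm{id}$ on $\Omega^2_0$. Once the identification $P_J=\pi_0\circ\Delta_g\circ\iota$ is in hand, everything else reduces to the self-adjointness of $\Delta_g$ together with the standard positivity $\langle\Delta_g\psi,\psi\rangle=\|d\psi\|^2+\|d^*\psi\|^2$.
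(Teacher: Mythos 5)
Your proof is correct, and on the ellipticity side it takes a genuinely different and much lighter route than the paper. The paper proves strong ellipticity by brute force: it invokes the Weitzenb\"ock--Bochner formula together with the decomposition of the curvature tensor into Weyl, Ricci and scalar parts to rewrite $(\Delta_g\psi,\omega)_g$ as a first-order expression in $\psi$ (exploiting the harmonicity of $\omega$ on an almost K\"ahler manifold), concluding that $P_J-\Delta_g$ is a differential operator of order $1$ and hence that $P_J$ has the same principal symbol as $\Delta_g$; self-adjointness is then essentially asserted. You instead write $P_J=\pi_0\circ\Delta_g\circ\iota$ with $\pi_0,\iota$ bundle maps and compute the symbol of the composition, $\sigma_\xi(P_J)=|\xi|^2\,\pi_0\iota=|\xi|^2\,\mathrm{id}_{\Lambda^2_0}$, which proves strong ellipticity in one line and, via $\langle\pi_0\Delta_g\psi,\phi\rangle=\langle\Delta_g\psi,\pi_0\phi\rangle$, makes formal self-adjointness a two-line computation. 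Note that your identity $\sigma_\xi(P_J-\Delta_g)=-|\xi|^2\,\pi_1\iota=0$ is exactly the abstract content of the paper's curvature computation, minus the explicit identification of the first-order terms --- information the paper never actually uses in this lemma, though it is in the spirit of Lejmi's finer analysis in dimension $4$. The kernel argument is the same in both proofs: pair $P_J\psi$ against $\psi$ in $L^2$, use that $(\omega,\psi)_g=0$ pointwise for primitive $\psi$, and conclude $\|d\psi\|^2+\|d^*\psi\|^2=0$ on the closed manifold. The one point you should make explicit rather than flag as bookkeeping is that the paper's primitivity condition $\omega^{n-1}\wedge\alpha=0$ coincides pointwise with $g(\alpha,\omega)=0$; this follows in one line from $*_g\omega=\omega^{n-1}/(n-1)!$ and is precisely what guarantees that $\Omega^2_0$ is the pointwise orthogonal complement of $\Omega^2_1$, so that $\pi_0\iota=\mathrm{id}$ and your whole identification $P_J=\pi_0\circ\Delta_g\circ\iota$ is legitimate.
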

  \begin{proof}
  We claim that
  \begin{eqnarray*}
   (P_J-\Delta_g)(\psi)&=& -\frac{1}{n}[d^*(\nabla^g\psi,\omega)_g-(\nabla^g\psi,\nabla^g\omega)_g-(2-\frac{4}{n-2})Tr(\omega\cdot Ric\cdot \psi) \\
                     &~& -\frac{2s^g}{(n-1)(n-2)}(\psi,\omega)_g-W^g(\psi,\omega)]\omega \\
                     &=& -\frac{1}{n}[-d^*(\psi,\nabla^g\omega)_g-(\nabla^g\psi,\nabla^g\omega)_g-(2-\frac{4}{n-2})Tr(\omega\cdot Ric\cdot \psi) \\
                     &~& -\frac{2s^g}{(n-1)(n-2)}(\psi,\omega)_g-W^g(\psi,\omega)]\omega ,
  \end{eqnarray*}
  where $W^g$ is the Weyl tensor (see \cite{BesE}), $\nabla^g$ is the Levi-Civita connection, $s^g$ is the Riemannian scalar curvature
  with respect to the metric $g$ and $Tr(\omega\cdot Ric\cdot \psi)=\Sigma_{i,j,k}\omega_{ij}R_{jk}\psi_{ik}$
  is the trace of $\omega\otimes Ric\otimes \psi$.
  Indeed, by Weitzenb\"{o}ck-Bochner formula (see \cite{BesE}), we will get
  \begin{eqnarray}\label{WB formula}
    (\Delta_g\psi-Tr(\nabla^g)^2\psi,\omega)_g &=& (\Delta_g\psi-d^*(\nabla^g)\psi,\omega)_g \nonumber\\
    &=& \sum_{i,j,k,l}2R_{iklj}\psi_{kl}\omega_{ij}-\sum_{i,j,k,l}R_{ik}\psi_{kj}\omega_{ij}-\sum_{i,j,k,l}R_{jk}\psi_{ik}\omega_{ij} \nonumber\\
    &=& \sum_{i,j,k,l}2R_{iklj}\psi_{kl}\omega_{ij}-\sum_{i,j,k,l}R_{ik}\psi_{kj}\omega_{ij}-\sum_{i,j,k,l}R_{ik}\psi_{jk}\omega_{ji} \nonumber\\
    &=& \sum_{i,j,k,l}2R_{iklj}\psi_{kl}\omega_{ij}-\sum_{i,j,k,l}2R_{ik}\psi_{kj}\omega_{ij}.
  \end{eqnarray}
  On the other hand,
  \begin{eqnarray}\label{Weyl}
   W^g(\omega,\psi) &=& (R-\frac{1}{n-2}Ric\circ g+\frac{s^g}{2(n-1)(n-2)}g\circ g)(\omega,\psi) \nonumber\\
                    &=& \sum_{i,j,k,l}\omega_{ij}\psi_{kl}R_{ijkl}-\sum_{i,j,k,l}\frac{1}{n-2}\omega_{ij}\psi_{kl}(R_{il}g_{jk}
                        +R_{jk}g_{il} \nonumber\\
                    &~& -R_{ik}g_{jl}-R_{jl}g_{ik}) +\frac{s^g}{2(n-1)(n-2) }g\circ g(\omega,\psi) \nonumber\\
                    &=& \sum_{i,j,k,l}\omega_{ij}\psi_{kl}R_{ijkl}-\sum_{i,k,l}\frac{4}{n-2}\omega_{ik}\psi_{kl}R_{il}
                        -\sum_{i,j}\frac{2s^g}{(n-1)(n-2)}\omega_{ij}\psi_{ij} \nonumber\\
                    &=&  \sum_{i,j,k,l}\omega_{ij}\psi_{kl}(-R_{iljk}-R_{iklj})-
                        \sum_{i,k,l}\frac{4}{n-2}\omega_{ik}\psi_{kl}R_{il} \nonumber\\
                    &~&  -\sum_{i,j}\frac{2s^g}{2(n-1)(n-2)}\omega_{ij}\psi_{ij} \nonumber\\
                    &=& \sum_{i,j,k,l}-2R_{iklj}\psi_{kl}\omega_{ij}-
                        \sum_{i,k,l}\frac{4}{n-2}R_{il}\psi_{kl}\omega_{ik} \nonumber\\
                    &~& -\sum_{i,j}\frac{2s^g}{2(n-1)(n-2)}\omega_{ij}\psi_{ij}.
   \end{eqnarray}
  Here we compute (\ref{WB formula}) and (\ref{Weyl}) under the local coordinates system $(x^1,x^2,\cdot\cdot\cdot,x^{2n})$.
  In addtion, we suppose $(\omega_{ij})$ to be the local representation of $\omega$.
  Similarly,
  $\psi=(\psi_{ij})$, $Ric=(R_{ij})$ and $R=(R_{ijkl})$, where $Ric$ is the Ricci curvature tensor and $R$ is the Riemannian curvature tensor with respect to metric $g$ .
  By (\ref{WB formula}) and (\ref{Weyl}), we can get the above claim.
  It is easy to see that $P_J-\Delta_g$ is a linear differential operator of order $1$.
  So the operator $P_J$ is a self-adjoint strongly elliptic linear operator of order $2$.

  It remains to prove that the kernel of $P_J$ is the space of the primitive $g$-harmonic $2$-forms,
  that is, $\mathcal{H}^2_g\cap\Omega^2_0$.
  Clearly, $\mathcal{H}^2_g\cap\Omega^2_0\subset \ker P_J$.
  For any $\psi\in\ker P_J$,
  \begin{eqnarray*}
    0 &=& \int_M(P_J(\psi),\psi)_gdvol_g \\
      &=& \int_M(\Delta_g\psi-\frac{1}{n}(\Delta_g\psi,\omega)_g\omega,\psi)_g \\
      &=& \int_M(\Delta_g\psi,\psi)_g \\
      &=& \int_M(d\psi,d\psi)_g+(d^*\psi,d^*\psi)_g.
  \end{eqnarray*}
  Hence, $d\psi=d^*\psi=0$ and $\psi$ is a primitive $g$-harmonic $2$-form.
  So we get $\ker P_J=\mathcal{H}^2_g\cap\Omega^2_0$.
  \end{proof}

  \begin{rem}{\rm
  Let $J_t$, $t\in[0,1]$ be a smooth family of $\omega$-compatible almost complex structures
  on $M$, then $\dim\ker P_{J_t}$ is an upper-semi-continuous function in $t$,
  by a classical result of Kodaira and Morrow showing the upper-semi-continuity of the kernel of a family of elliptic
  differential operators (Theorem 4.3 in \cite{KMC}).
  }
  \end{rem}

  We define $\mathcal{H}^-_J$ to be the space of the harmonic $J$-anti-invariant 2-forms and
  $\mathcal{H}^+_{J,0}$ to be the space of the harmonic primitive $J$-invariant 2-forms.
  Define the primitive $J$-invariant cohomology subgroup $H^+_{J,0}$ by
  \begin{align*}
  H^+_{J,0}=\{\mathfrak a \in H^2_{dR}(M;\mathbb{R}) \mid
           \mbox{there exists} \ \alpha \in \mathcal Z^2\cap\Omega^+_{J,0} \ \mbox{such that} \ \mathfrak a = [\alpha]\}.
  \end{align*}

  \begin{prop}\label{col kerP}
  Suppose that $(M,g,J,\omega)$ is a closed almost K\"{a}hler $2n$-manifold,
  then $\ker P_J=\mathcal{H}^-_J\oplus\mathcal{H}^+_{J,0}$ and the harmonic representatives of $H^-_J$ and $H^+_{J,0}$ are of pure degree,
  that is,
  $$H^-_J\cong\mathcal{H}^-_J,\,\,\, H^+_{J,0}\cong\mathcal{H}^+_{J,0}.$$
  \end{prop}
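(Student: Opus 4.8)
The plan is to combine the identification of the kernel from Lemma~\ref{P lemma} with Weil's identity (\ref{Weil's relation}) and the closedness of $\omega$, reducing the whole statement to one point: that each piece in the $J$-decomposition of a primitive $g$-harmonic $2$-form is again $g$-harmonic. By Lemma~\ref{P lemma} we already have $\ker P_J=\mathcal{H}^2_g\cap\Omega^2_0$, the space of primitive $g$-harmonic $2$-forms. Since $\Omega^2_0=\Omega^+_{J,0}\oplus\Omega^-_J$ is a pointwise orthogonal splitting and both $\mathcal{H}^-_J=\mathcal{H}^2_g\cap\Omega^-_J$ and $\mathcal{H}^+_{J,0}=\mathcal{H}^2_g\cap\Omega^+_{J,0}$ lie inside $\ker P_J$, the inclusion $\mathcal{H}^-_J\oplus\mathcal{H}^+_{J,0}\subseteq\ker P_J$ is immediate and orthogonal; the content is the reverse inclusion together with the two cohomology isomorphisms.

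The first step is the observation that a \emph{closed primitive $2$-form of pure $J$-type is automatically $g$-harmonic}. Indeed, for a primitive $2$-form $\eta$, formula (\ref{Weil's relation}) with $r=0$, $k=2$ gives $*_g\eta=-\frac{1}{(n-2)!}\,\omega^{n-2}\wedge\mathcal{J}(\eta)$, and on $2$-forms $\mathcal{J}$ acts as $+1$ on $\Omega^+_{J,0}$ and as $-1$ on $\Omega^-_J$; hence $*_g\eta=\mp\frac{1}{(n-2)!}\,\omega^{n-2}\wedge\eta$ accordingly. Because $d\omega=0$, if moreover $d\eta=0$ then $d*_g\eta=\mp\frac{1}{(n-2)!}\,\omega^{n-2}\wedge d\eta=0$, so $d^*\eta=-*_g d*_g\eta=0$ and $\eta$ is $g$-harmonic. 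Applying this to the two pure types yields $\mathcal{H}^-_J=\mathcal{Z}^-_J$ and $\mathcal{H}^+_{J,0}=\mathcal{Z}^2\cap\Omega^+_{J,0}$, so in particular both sit in $\ker P_J$. The maps $\mathcal{H}^-_J\to H^-_J$ and $\mathcal{H}^+_{J,0}\to H^+_{J,0}$, $\alpha\mapsto[\alpha]$, are then injective (a harmonic exact form vanishes) and surjective (every class in $H^-_J$, resp.\ $H^+_{J,0}$, has a closed representative of pure type, which by the above is harmonic), giving $H^-_J\cong\mathcal{H}^-_J$ and $H^+_{J,0}\cong\mathcal{H}^+_{J,0}$ with harmonic representatives of pure degree.

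For the reverse inclusion, take $\psi=\psi^++\psi^-\in\ker P_J$ with $\psi^+\in\Omega^+_{J,0}$, $\psi^-\in\Omega^-_J$, and set $\gamma:=d\psi^+$. From $d\psi=0$ we get $d\psi^-=-\gamma$. Using $*_g\psi=\frac{1}{(n-2)!}\,\omega^{n-2}\wedge(\psi^--\psi^+)$ and $d^*\psi=-*_g d*_g\psi$, the condition $d^*\psi=0$ becomes $\omega^{n-2}\wedge\gamma=0$, i.e.\ $\gamma$ is a primitive $3$-form. Feeding this back through (\ref{Weil's relation}) for each summand gives $d^*\psi^+=\frac{1}{(n-2)!}*_g(\omega^{n-2}\wedge\gamma)=0$ and likewise $d^*\psi^-=0$; thus $\psi^+$ and $\psi^-$ are already $g$-co-closed, and the entire statement reduces to showing that they are $g$-closed, equivalently that $\gamma=0$.

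This last point is the main obstacle, and it is genuinely almost-Kähler in nature: formal Hodge theory does not settle it. One checks that $\gamma$ is primitive, $d$-closed, $d$-exact and even symplectically harmonic ($d^\Lambda\gamma=0$), all of which remain compatible with $\gamma\neq0$ (witness the failure of hard Lefschetz), and every $L^2$-pairing one writes down, e.g.\ $\|\gamma\|^2=\langle\psi^+,d^*\gamma\rangle$ with $d^*\gamma=\Delta_g\psi^+$, collapses to a tautology. The desired decomposition is in fact equivalent to $\mathcal{J}$ preserving $\ker P_J$, i.e.\ to $\Pi_0[\Delta_g,\mathcal{J}]\psi=0$ for primitive $g$-harmonic $\psi$, where $\Pi_0$ is the pointwise projection onto $\Omega^2_0$; in the integrable Kähler case this commutator vanishes identically, whereas in the genuinely almost Kähler case it must be controlled by the structure. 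The natural route is to re-use the Weitzenböck--Bochner computation already performed in the proof of Lemma~\ref{P lemma}: expanding $d^*\gamma=\Delta_g\psi^+$ and invoking the curvature identities relating $R$, $Ric$, $W^g$ and $s^g$ together with $d\omega=0$ should force the $\Omega^+_J$- and $\Omega^-_J$-components of $\Delta_g\psi$ to decouple, yielding $d^*\gamma=0$ and hence $\gamma=0$ since $\gamma$ is exact and harmonic. Once $\gamma=0$ is established, $\psi^\pm$ are closed primitive forms of pure type, hence $g$-harmonic by the first step, so $\psi\in\mathcal{H}^+_{J,0}\oplus\mathcal{H}^-_J$; together with the second paragraph this gives $\ker P_J=\mathcal{H}^-_J\oplus\mathcal{H}^+_{J,0}$ and the stated isomorphisms.
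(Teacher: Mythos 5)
Your setup and reductions are correct, and up to the final step they coincide with the paper's own proof: Lemma~\ref{P lemma} identifies $\ker P_J$ with the primitive $g$-harmonic $2$-forms; Weil's identity (\ref{Weil's relation}) with $r=0$, $k=2$ gives $*_g\eta=\mp\frac{L^{n-2}}{(n-2)!}\eta$ on the two pure types, so closed primitive forms of pure type are automatically coclosed, which yields $H^-_J\cong\mathcal{H}^-_J$ and $H^+_{J,0}\cong\mathcal{H}^+_{J,0}$ exactly as in the paper; and for $\psi=\psi^++\psi^-\in\ker P_J$ the conditions $d\psi=0$, $d*_g\psi=0$ give that $\gamma:=d\psi^+=-d\psi^-$ satisfies $\omega^{n-2}\wedge\gamma=0$ and that both $\psi^\pm$ are coclosed. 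But your proof stops there. The last paragraph, which is where the reverse inclusion $\ker P_J\subseteq\mathcal{H}^-_J\oplus\mathcal{H}^+_{J,0}$ actually lives, establishes nothing: ``expanding $d^*\gamma=\Delta_g\psi^+$ and invoking the curvature identities \dots should force the components to decouple'' is a program, not an argument, and you concede that every $L^2$-pairing you wrote down collapses to a tautology. So for $n\geq 3$ you have not shown $\gamma=0$, hence not proved the proposition; only for $n=2$ does your own computation close the argument, since then $\omega^{n-2}\wedge\gamma=\gamma$ and primitivity of the $3$-form $\gamma$ forces $\gamma=0$ outright.

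That said, your diagnosis of where the difficulty sits is more careful than the paper's handling of it. The paper derives $d\bigl(\tfrac{L^{n-2}}{(n-2)!}\beta_\alpha\bigr)=d\bigl(\tfrac{L^{n-2}}{(n-2)!}+*_g\bigr)\alpha=0$ and immediately concludes ``so we can get $d\beta_\alpha=0$.'' Since $d\omega=0$, that display says only $L^{n-2}d\beta_\alpha=0$; for a $3$-form, primitivity is precisely the condition $L^{n-2}B_3=0$, so the kernel of $L^{n-2}$ on $\Lambda^3$ is the whole space of primitive $3$-forms, which is nonzero once $n\geq3$ (only for $n=2$ do nonzero primitive $3$-forms fail to exist, which is why Lejmi's four-dimensional argument works). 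Thus the paper's one-line inference is exactly the assertion $\gamma=0$ that you declined to make, passed over without justification. In short: as a proof your proposal is incomplete for general $n$, but the step you could not close is the same step the paper does not actually prove, and any complete argument for $n\geq3$ must supply the missing input --- for instance along the Weitzenb\"ock--Bochner lines you sketch --- showing that the primitive, exact, $d$- and $d^\Lambda$-closed $3$-form $d\psi^+$ vanishes.
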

  \begin{proof}
   For any $\alpha\in\ker P_J$, $\alpha$ is primitive and harmonic.
  So $d\alpha=0, d*_g\alpha=0$
  and $\alpha$ can be written as
  $\alpha=\beta_{\alpha}+\gamma_{\alpha}$, where $\beta_{\alpha}\in\Omega^-_J$ and $\gamma_{\alpha}\in\Omega^+_{J,0}$.
  By a direct computation, we get
  $$*_g\alpha=\frac{L^{n-2}}{(n-2)!}(\beta_{\alpha}-\gamma_{\alpha})$$
  and $$(\frac{L^{n-2}}{(n-2)!}+*_g)\alpha=2\frac{L^{n-2}}{(n-2)!}\beta_{\alpha}.$$
  Hence,
  $$d(2\frac{L^{n-2}}{(n-2)!}\beta_{\alpha})=d(\frac{L^{n-2}}{(n-2)!}+*_g)\alpha=0.$$
  So we can get $d\beta_{\alpha}=0$.
  Since $*_g\beta_{\alpha}=\frac{L^{n-2}}{(n-2)!}\beta_{\alpha}$, $d*_g\beta_{\alpha}=0$.
  Therefore, $\beta_{\alpha}\in\mathcal{H}^-_J$.
  Similarly,
  $$(\frac{L^{n-2}}{(n-2)!}-*_g)\alpha=2\frac{L^{n-2}}{(n-2)!}\gamma_{\alpha}$$
  and we can get $\gamma_{\alpha}\in\mathcal{H}^+_{J,0}$.
  Thus, $\ker P_J=\mathcal{H}^-_J\oplus\mathcal{H}^+_{J,0}$.

  For any $\mathfrak a=[\alpha]\in H^-_J$, $\alpha\in\mathcal{Z}^-_J$.
  By (\ref{Weil's relation}),
  $$*_g\alpha=-\frac{L^{n-2}}{(n-2)!}\mathcal{J(\alpha)}=\frac{L^{n-2}}{(n-2)!}\alpha.$$
  So $d*_g\alpha=d\frac{L^{n-2}}{(n-2)!}\alpha=0$ and $\alpha$ is a harmonic $J$-anti-invariant form, that is, $\alpha\in\mathcal{H}^-_J$.
  Hence, $H^-_J\cong\mathcal{H}^-_J$.
  Similarly, $H^+_{J,0}\cong\mathcal{H}^+_{J,0}$.
  The harmonic representatives of $H^-_J$ and $H^+_{J,0}$ are of pure degree.
  \end{proof}

  \begin{rem}{\rm
  In case $n=2$, on a closed almost K\"{a}hler $4$-manifold, Lejmi \cite{LejS1} proved that $P_J$ preserves the decomposition
  $$\Omega^2_0=\Omega^+_{J,0}\oplus\Omega^-_J.$$
  Furthermore,
  $P_J|_{\Omega^+_{J,0}}(\psi)=\Delta_g\psi$ and
  $P_J|_{\Omega^-_J}(\psi)=2d^-_Jd^*\psi$.
  He also pointed out that $P_J|_{\Omega^-_J}(\psi)=2d^-_Jd^*\psi$ is a self-adjoint strongly elliptic linear operator from
  $\Omega^-_J$ to $\Omega^-_J$ on a closed almost K\"{a}hler $4$-manifold.
  It follows that the kernel of $P_J$ consists of primitive harmonic $2$-forms which splits
  as anti-self-dual and $J$-anti-invariant ones. So he gets
  $$\dim\ker P_J=b^-+h^-_J.$$
  But when $n>2$, by computing the principal symbol of $d^-_Jd^*$, one finds that $d^-_Jd^*$ is no longer a self-adjoint strongly elliptic linear operator.
  So we are not able to get any good properties about the $\dim\ker P_J$ in higher dimension.
  }
  \end{rem}
  In \cite{Angella}, D. Angella and A. Tomassini define
  $$H^{(r,s)}_\omega(M;R)\triangleq\{[L^r\beta]\in H^{2r+s}_{dR}(M;\mathbb{R}): \beta\in \Omega^s_0\}\subseteq H^{2r+s}_{dR}(M;\mathbb{R})$$
  for $r,s\in \mathbb{N}$.
  Obviously, for every $k\in \mathbb{N}$, one has
  $$\sum_{2r+s=k}H^{(r,s)}_\omega(M;\mathbb{R})\subseteq H^{2r+s}_{dR}(M;\mathbb{R}).$$
  A natural question is that when the above inclusion is actually an equality, and when the sum is a direct sum.
  Fortunately, Angella and Tomassini have proved that
  \begin{equation}\label{AT}
    H^2_{d\mathbb{R}}(M;\mathbb{R})=H^{(1,0)}_\omega(M;\mathbb{R})\oplus H^{(0,2)}_\omega(M;\mathbb{R})
  \end{equation}
  in \cite{Angella}.
  Clearly,  $H^{(1,0)}_\omega(M;\mathbb{R})\cong Span_{\mathbb{R}}\{\omega\}$
  and $\dim H^{(0,2)}_\omega(M;\mathbb{R})=b^2-1$.
  Here we want to emphasize that $\ker P_J\cong H^-_J\oplus H^+_{J,0}\subseteq H^{(0,2)}_\omega(M;\mathbb{R})$
  and if $\dim\ker P_J=b^2-1$, then $\ker P_J\cong H^-_J\oplus H^+_{J,0}=H^{(0,2)}_\omega(M;\mathbb{R})$.

  \vskip 6pt

  It is well known that on any closed almost complex $4$-manifold $(M,J)$, $J$ is $C^\infty$ pure and full (see \cite{DLZS}).
  But we can not get this result in higher dimension.
  In \cite{FTO}, A. Fino and A. Tomassini showed the existence of a compact $6$-dimensional nil-manifold with an almost complex structure
  which is not $C^\infty$ pure, i.e., the intersection of $H^+_J(M)$ and $H^-_J(M)$ is non-empty.
  They also prove that on a compact almost complex $2n$-manifold $(M,J)$, if $J$ admits a compatible symplectic structure, then $J$ is $C^\infty$ pure. With this result and by studying the dimension of $\dim\ker P_J$, we can get the following theorem.
  \begin{theo}
  Suppose that $(M,g,J,\omega)$ is a closed almost K\"{a}hler $2n$-manifold,
  if $\dim\ker P_J=b^2-1$, then $J$ is $C^\infty$ pure and full and
  \begin{eqnarray*}
     H^2_{dR}(M;\mathbb{R})&=&  H^+_J\oplus H^-_J \\
     &=&  Span_{\mathbb{R}}\{\omega\}\oplus H^+_{J,0}\oplus H^-_J\\
     &=&  H^{(1,0)}_\omega(M;\mathbb{R})\oplus H^{(0,2)}_\omega(M;\mathbb{R}).
  \end{eqnarray*}
  Moreover, $J$ is pure and full.
  \end{theo}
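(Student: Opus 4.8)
The plan is to split the statement into two parts: first the cohomological decomposition together with the $C^\infty$ pure and full property, which follows fairly directly from the material already assembled, and second the homological assertion that $J$ is pure and full, which needs a duality argument on the level of currents.

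For the first part I would start from Proposition \ref{col kerP}, which gives $\ker P_J\cong H^-_J\oplus H^+_{J,0}$, so $\dim\ker P_J=h^-_J+\dim H^+_{J,0}$. The remark preceding the theorem records both the inclusion $H^-_J\oplus H^+_{J,0}\subseteq H^{(0,2)}_\omega(M;\mathbb R)$ and the equality $\dim H^{(0,2)}_\omega(M;\mathbb R)=b^2-1$, so the hypothesis $\dim\ker P_J=b^2-1$ forces this finite-dimensional inclusion to be an equality $H^-_J\oplus H^+_{J,0}=H^{(0,2)}_\omega(M;\mathbb R)$. Feeding this into the Angella--Tomassini splitting (\ref{AT}) and using $H^{(1,0)}_\omega(M;\mathbb R)\cong\mathrm{Span}_{\mathbb R}\{\omega\}$ yields $H^2_{dR}(M;\mathbb R)=\mathrm{Span}_{\mathbb R}\{\omega\}\oplus H^+_{J,0}\oplus H^-_J$. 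Since $\omega$ is $J$-invariant and closed and $H^+_{J,0}\subseteq H^+_J$, the subspace $\mathrm{Span}_{\mathbb R}\{\omega\}\oplus H^+_{J,0}$ lies in $H^+_J$, whence $H^+_J+H^-_J=H^2_{dR}(M;\mathbb R)$, i.e. $J$ is $C^\infty$ full; the Fino--Tomassini result quoted above (an almost complex structure admitting a compatible symplectic form is $C^\infty$ pure) gives $H^+_J\cap H^-_J=0$, so $J$ is $C^\infty$ pure and full and $H^2_{dR}=H^+_J\oplus H^-_J$. Comparing $h^+_J+h^-_J=b^2$ with $1+\dim H^+_{J,0}+h^-_J=b^2$ then upgrades the inclusion to $H^+_J=\mathrm{Span}_{\mathbb R}\{\omega\}\oplus H^+_{J,0}$, giving the whole chain of displayed identities.

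For the second part I would use the nondegenerate de Rham pairing $\langle\,\cdot\,,\cdot\,\rangle\colon H^2_{dR}(M;\mathbb R)\times H_2(M;\mathbb R)\to\mathbb R$ together with the dual $J$-decompositions of forms and currents set up above. Since $\mathcal E^J_{(2,0),(0,2)}$ is the topological dual of $\Omega^-_J$ and annihilates $\Omega^+_J$ (and symmetrically for $\mathcal E^J_{1,1}$), the pairing has the block structure $\langle H^+_J,H^J_{(2,0),(0,2)}\rangle=0$ and $\langle H^-_J,H^J_{1,1}\rangle=0$. Purity on the homology side is then immediate: any class in $H^J_{1,1}\cap H^J_{(2,0),(0,2)}$ is orthogonal to both $H^+_J$ and $H^-_J$, hence to all of $H^2_{dR}=H^+_J\oplus H^-_J$, and so vanishes. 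For fullness I would convert the harmonic representatives of the first part into currents via $*_g$: for a harmonic $2$-form $\beta$ set $T_\beta(\alpha)=\int_M\alpha\wedge *_g\beta=\langle\alpha,\beta\rangle_{L^2}$, which is a closed $2$-current because $d*_g\beta=0$, and whose type is governed by $*_g\beta$. Using the Weil identity (\ref{Weil's relation}) exactly as in Proposition \ref{col kerP}, a harmonic $J$-anti-invariant $\beta$ has $*_g\beta$ of type $(n,n-2)+(n-2,n)$, so $T_\beta$ is a closed $(2,0),(0,2)$-current, while $\omega$ and the harmonic primitive $J$-invariant forms give closed $(1,1)$-currents; the assignment $\beta\mapsto[T_\beta]$ is injective on classes, since $[T_\beta]=0$ would give $\|\beta\|^2_{L^2}=T_\beta(\beta)=0$. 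Thus $\dim H^J_{(2,0),(0,2)}\geq h^-_J$ and $\dim H^J_{1,1}\geq 1+\dim H^+_{J,0}=h^+_J$, and combined with purity and $h^+_J+h^-_J=b^2=\dim H_2(M;\mathbb R)$ this forces $H_2(M;\mathbb R)=H^J_{1,1}\oplus H^J_{(2,0),(0,2)}$, which is (\ref{pure full}).

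I expect the main obstacle to be fullness in this second part, since the bare duality argument yields only orthogonality and purity. The decisive point is the $*_g$-correspondence and its injectivity, which is what actually embeds the harmonic data of $\ker P_J$ into $H_2(M;\mathbb R)$ with the correct bidegrees; the care is needed precisely in the type computation for $*_g\beta$ through the Weil identity and in checking that the $L^2$ pairing respects the orthogonal $J$-splitting of $\Omega^2$.
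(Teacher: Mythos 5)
Your proposal is correct, and it diverges from the paper in an interesting way precisely where the paper is least explicit. For the first half you and the paper use the same ingredients (Proposition \ref{col kerP}, the Fino--Tomassini $C^\infty$-pureness of compatible almost complex structures, and the Angella--Tomassini splitting (\ref{AT})), only packaged differently: the paper runs the dimension sandwich $h^+_J+h^-_J\leq b^2=\dim\mathcal{H}^-_J+(\dim\mathcal{H}^+_{J,0}+1)\leq h^+_J+h^-_J$ to extract fullness and $H^+_J=Span_{\mathbb{R}}\{\omega\}\oplus H^+_{J,0}$, whereas you first force the inclusion $H^-_J\oplus H^+_{J,0}\subseteq H^{(0,2)}_\omega(M;\mathbb{R})$ to be an equality (exactly the observation the paper records in the remark preceding the theorem) and then read fullness off (\ref{AT}); these are equivalent, and your closing dimension comparison to upgrade $Span_{\mathbb{R}}\{\omega\}\oplus H^+_{J,0}\subseteq H^+_J$ to an equality is the same count as the paper's. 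The genuine difference is the final assertion that $J$ is pure and full: the paper disposes of it in two lines by citing Theorem 3.7 of Fino--Tomassini, after noting that $H^-_J\cong\mathcal{H}^-_J$ and $H^+_J\cong\mathcal{H}^+_J$, i.e.\ that the harmonic representatives are of pure degree. You instead prove the needed case of that theorem from scratch: purity via the block orthogonality of the de Rham pairing between $H^\pm_J$ and the current groups $H^J_{1,1}$, $H^J_{(2,0),(0,2)}$ together with the already-established $C^\infty$ fullness and nondegeneracy of the pairing, and fullness via the map $\beta\mapsto T_\beta=\int_M(\cdot)\wedge *_g\beta$, with the Weil identity (\ref{Weil's relation}) giving $*_g\beta=\frac{L^{n-2}}{(n-2)!}\beta$ of type $(n,n-2)+(n-2,n)$ for $\beta\in\mathcal{H}^-_J$ and $*_g\gamma=-\frac{L^{n-2}}{(n-2)!}\gamma$ of type $(n-1,n-1)$ for $\gamma\in Span_{\mathbb{R}}\{\omega\}\oplus\mathcal{H}^+_{J,0}$, the injectivity on classes coming from $T_\beta(\beta)=\|\beta\|^2_{L^2}$ because boundary currents annihilate closed forms; with purity and $\dim H_2(M;\mathbb{R})=b^2$ this yields (\ref{pure full}). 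Note that your type computation is exactly the ``pure degree of harmonic representatives'' hypothesis on which the paper's citation rests, so the two arguments have the same mathematical core; what your route buys is a self-contained proof that exposes the duality mechanism, at the cost of length, while the paper's buys brevity at the cost of sending the reader to \cite{FTO}. The only points worth making fully explicit in a final write-up are that the directness of $H^-_J+H^+_{J,0}$ (needed for your dimension counts) comes from the uniqueness of harmonic representatives in Proposition \ref{col kerP}, and that the natural maps $H^J_S\to H_2(M;\mathbb{R})$ are injective because $\mathcal{B}^J_S=\mathcal{B}_2\cap\mathcal{E}^J_S$, so that your inequalities $\dim H^J_{(2,0),(0,2)}\geq h^-_J$ and $\dim H^J_{1,1}\geq h^+_J$ really take place inside $H_2(M;\mathbb{R})$.
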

  \begin{proof}

  In \cite{FTO}, A. Fino and A. Tomassini showed that $J$ is $C^\infty$ pure if $(M,g,J,\omega)$ is a
  closed almost K\"{a}hler $2n$-manifold.
  In \cite{DLZS}, T. Draghici, T.-J. Li and W. Zhang proved the same result on a closed almost K\"{a}hler $2n$-manifold.
  Next, we will prove that $J$ is $C^\infty$ full under the condition
  of $\dim\ker P_J=b^2-1$.

  By proposition \ref{col kerP},
  if $\dim\ker P_J=b^2-1$, we get
  $$
  \dim \mathcal{H}^-_J+\dim \mathcal{H}^+_{J,0}=b^2-1.
  $$
  Hence,
  \begin{equation}\label{b^2 decompose}
    b^2=\dim \mathcal{H}^-_J+(\dim \mathcal{H}^+_{J,0}+1).
  \end{equation}
  It is easy to see that
  $$H^+_J\oplus H^-_J \subseteq H^2_{dR}(M;\mathbb{R}),$$
  and
  $$Span_{\mathbb{R}}\{\omega\}\oplus H^+_{J,0}\subseteq H^+_J.$$
  So we will get
  \begin{equation}\label{b^2}
    h^+_J+h^-_J\leq b^2
  \end{equation}
  and
  \begin{equation}\label{h^+_J}
  \dim H^+_{J,0}+1=\dim \mathcal{H}^+_{J,0}+1\leq h^+_J.
  \end{equation}
  Therefore, by (\ref{b^2 decompose}), (\ref{b^2}) and (\ref{h^+_J}),
  $$h^+_J+h^-_J\leq b^2=\dim \mathcal{H}^-_J+(\dim \mathcal{H}^+_{J,0}+1)\leq h^+_J+h^-_J.$$
  Clearly, $h^+_J+h^-_J=b^2$ and $\dim \mathcal{H}^+_{J,0}+1=h^+_J$.
  So we can get
  $$H^2_{dR}(M;\mathbb{R})=H^+_J\oplus H^-_J$$
  and
  $$Span_{\mathbb{R}}\{\omega\}\oplus H^+_{J,0}= H^+_J.$$
  Then we can get the following decompositions
  \begin{eqnarray*}
     H^2_{dR}(M;\mathbb{R})&=&  H^+_J\oplus H^-_J \\
     &=&  Span_{\mathbb{R}}\{\omega\}\oplus H^+_{J,0}\oplus H^-_J\\
     &=&  H^{(1,0)}_\omega(M;\mathbb{R})\oplus H^{(0,2)}_\omega(M;\mathbb{R})
  \end{eqnarray*}
  and
  $H^+_J\cong\mathcal{H}^+_J$,
  where $\mathcal{H}^+_J$ is the space of the harmonic J-invariant 2-forms.
  Of course, $J$ is $C^\infty$ pure and full.

  We have proven that $J$ is $C^\infty$ pure and full.
  Then by Theorem $3.7$ in \cite{FTO}, we can get that $J$ is pure.
  In addition, in the above proof, we have gotten that
  $H^-_J\cong\mathcal{H}^-_J$ and  $H^+_J\cong\mathcal{H}^+_J$ when $\dim\ker P_J=b^2-1$.
  Hence the harmonic representatives of $H^2_{dR}(M;\mathbb{R})$ are of pure degree (cf. \cite{Angella2}).
  Also by Theorem $3.7$ in \cite{FTO}, we can get that $J$ is pure and full.
  This completes the proof of the Theorem.
  \end{proof}

  In the above theorem, $\dim\ker P_J=b^2-1$ is just the sufficient condition but not the necessary condition
  for $J$'s $C^\infty$ pureness and fullness.
  Just by $J$'s $C^\infty$ pureness and fullness, we can not get $\dim\ker P_J=b^2-1$.
  Indeed, we have the following counter-example which is constructed by T. Draghici  (\cite{Dra}).

  \begin{exa}\label{example1}
  {\rm (also cf. \cite{Angella2}) Let $\mathbb{T}^4$ be the standard torus with coordinates $\{x^1,x^2,x^3,x^4\}$.
  Denote by $(g_0,J_0,\omega_0)$ be a standard flat K\"{a}hler structure on $\mathbb{T}^4$,
  so $(\mathbb{T}^4, \omega_0)$ has the hard Lefschetz property (cf. \cite{YanH}),
  that is, the map
 \[
   H^k_{dR}(\mathbb{T}^4;\mathbb{R})\rightarrow H^{4-k}_{dR}(\mathbb{T}^4;\mathbb{R}),\quad \alpha\mapsto [\omega_0]^{2-k}\wedge \alpha,
 \]
  is an isomorphism for all $k\leqslant 2$.
  We choose
  $$
  g_0=\sum_i dx^i\otimes dx^i,\,\,\, \omega_0=dx^1\wedge dx^2+dx^3\wedge dx^4,
  $$
  so $J_0$ is given by
  $$
  J_0dx^1=dx^2,\,\,\,J_0dx^2=-dx^1,\,\,\,J_0dx^3=dx^4,\,\,\,J_0dx^4=-dx^3.
  $$
  Equivalently, $J_0$ may be given by specifying
  $$
  \Lambda^-_{J_0}=Span\{dx^1\wedge dx^3-dx^2\wedge dx^4,dx^1\wedge dx^4+dx^2\wedge dx^3\}.
  $$
  Consider the almost complex structure $J$ given by
  $$
  Jdx^1=mdx^2,\,\,\,Jdx^2=-\frac{1}{m}dx^1,\,\,\,Jdx^3=dx^4,\,\,\,Jdx^4=-dx^3,
  $$
  where $m=m(x^2,x^4)$ is a positive periodic function on $x^2,x^4$ only.
  It ie easy to see that
  $$
  \Lambda^-_J=Span\{dx^1\wedge dx^3-mdx^2\wedge dx^4,dx^1\wedge dx^4+mdx^2\wedge dx^3\},
  $$
  and $J$ is compatible with $\omega_0$.
  $$g(\cdot,\cdot)=\omega_0(\cdot,J\cdot)=\frac{1}{m}dx^1\otimes dx^1+mdx^2\otimes dx^2+dx^3\otimes dx^3+dx^4\otimes dx^4.$$
 We claim that we can choose $m$ such that $h^-_J=1$ (proved by T. Draghici in {\rm \cite{Dra}}).

 Denote by $\psi_1=dx^1\wedge dx^3-mdx^2\wedge dx^4$ and $\psi_2=dx^1\wedge dx^4+mdx^2\wedge dx^3$.
 Note that $m=m(x^2,x^4)$, we have
 \begin{equation}\label{example 1}
   d\psi_1=0,\,\,\, d\psi_2=m_4dx^2\wedge dx^3\wedge dx^4=(\log m)_4dx^4\wedge\psi_2.
 \end{equation}
 Here we denote by $m_4=(\partial m)/(\partial x^4)$ and $(\log m)_4=(\partial(\log m))/(\partial x^4)$.
 The general $J$-anti-invariant form is written as $A\psi_1+B\psi_2$, where $A,B$ are smooth functions on the torus.
 The condition that such a form to be closed is equivalent with
 \begin{equation}\label{example 2}
   dA\wedge\psi_1+(dB+B(\log m)_4dx^4)\wedge\psi_2=0.
 \end{equation}
 We claim that in any solution $(A,B)$ of (\ref{example 2}) $A$ must be a constant.
 To see this, taking the Hodge operator $*_g$ of both sides of the above equation we get
 \begin{equation}\label{example 3}
   JdA=-(dB+B(\log m)_4dx^4).
 \end{equation}
 Taking one more differential,
 and then taking trace with respect to $\omega_0$ we get
 \begin{equation}\label{example 4}
   \Delta A=B_3(\log m)_4,
 \end{equation}
 where $B_3=(\partial B)/(\partial x^3)$.
 By (\ref{example 3}), we get $B_3=A_4:=(\partial A)/(\partial x^4)$.
 Then $(\ref{example 4})$ evolves into
 \begin{equation}\label{example 5}
   -\Delta A+A_4(\log m)_4=0.
 \end{equation}
 By the maximum principle, it follows that $A=const$.
 Plugging this back in (\ref{example 3}), we see that $B$ must satisfy
 $$dB=-B(\log m)_4dx^4.$$
 It is easy to see that the compatibility relation of this system is
 $$B(\log m)_{42}=0.$$
 Thus, if $(\log m)_{42}\neq 0$ (Indeed, it will be sufficient that the locus of $(\log m)_{42}$ has zero Lebesgue measure.),
 the only solutions for (\ref{example 2}) are $A=const$
 and $B=0$, hence $h^-_J=1$.
 We choose $m=e^{\sin 2\pi(x^2+x^4)}$ to be a positive periodic function on $\mathbb{T}^4$ such that $(\log m)_{42}=-4\pi^2\sin2\pi(x^2+x^4)$
 has zero Lebesgue measure.
 So on $(\mathbb{T}^4, g, J,\omega_0)$, $h^-_J$ is equal to $1$.
 Of course, $J$ is $C^\infty$ pure and full
 (T. Draghici, T.-J. Li and W. Zhang proved in \cite{DLZS} that on any closed almost complex 4-manifold $(M,J)$,
 $J$ is $C^\infty$ pure and full.).
 Additionally, Lejmi \cite{LejS1} proved that, on a closed almost K\"{a}hler $4$-manifold $(M,g,J,\omega)$,
 the kernel of $P_J$ consists of primitive harmonic 2-forms and $\dim\ker P_J=b^-+h^-_J$.
 So on $(\mathbb{T}^4, g, J,\omega_0)$, $\dim\ker P_J=b^-+h^-_J=4<5=b^2-1$.

 Let us denote by $e_i\triangleq dx^i$ and $e_{ij}\triangleq dx^i\wedge dx^j$. Please see the following table.

  \begin{center}
  \begin{tabular}{|c|l|}
    \hline

   $\mathcal Z_{J_0}^+$ &  $\omega_0$, \,\,\, $e_{12}-e_{34}$, \,\,\, $e_{13}+e_{24}$, \,\,\, $e_{14}-e_{23}$  \\

   \hline

   $\mathcal Z_{J_0}^-$ &  $e_{13}-e_{24}$, \,\,\, $e_{14}+e_{23}$    \\

   \hline

   $\ker P_{J_0}$ & $e_{12}-e_{34}$, \,\,\, $e_{13}+e_{24}$, \,\,\,$e_{14}-e_{23}$, \,\,\, $e_{13}-e_{24}$, \,\,\, $e_{14}+e_{23}$   \\

   \hline

    $\mathcal{H}^+_{g_0}$ &  $\omega_0$, \,\,\, $e_{13}-e_{24}$, \,\,\, $e_{14}+e_{23}$ \\

   \hline

    $\mathcal{H}^-_{g_0}$ &  $e_{12}-e_{34}$, \,\,\, $e_{13}+e_{24}$, \,\,\, $e_{14}-e_{23}$  \\

   \hline

   $\mathcal Z^+_J$ &  $\omega_0$, \,\,\, $e_{12}-e_{34}$, \,\,\, $e_{13}+me_{24}$,  \,\,\, $\frac{1}{1+m}(e_{12}-e_{34}+e_{14}-me_{23})$,  \\
                 &              $\frac{1}{1-m}(e_{12}+e_{34}+e_{14}-me_{23})$            \\
   \hline
   $\mathcal Z^-_J$ &  $e_{13}-me_{24}$              \\
   \hline
   $\ker P_J$ &  $e_{13}-me_{24}$, \,\,\, $e_{12}-e_{34}$, \,\,\, $e_{13}+me_{24}$,  \,\,\, $\frac{1}{1+m}(e_{12}-e_{34}+e_{14}-me_{23})$   \\
   \hline
   $\mathcal{H}^+_g$ &  $\omega_0$, \,\,\, $e_{13}-me_{24}$,\,\,\, $\frac{1}{1+m}(\omega_0+e_{14}+me_{23})$   \\
   \hline
   $\mathcal{H}^-_g$ &  $e_{12}-e_{34}$, \,\,\, $e_{13}+me_{24}$,\,\,\, $\frac{1}{1+m}(e_{12}-e_{34}+e_{14}-me_{23})$   \\
   \hline
  \end{tabular}
  \end{center}
  \begin{center}
  {\bf Table 1.} Bases for $\mathcal Z_{J_0}^+$, $\mathcal Z_{J_0}^-$, $\ker P_{J_0}$, etc.  of $\mathbb{T}^4$.
  \end{center}
   }
  \end{exa}

 \vskip 6pt

 By the above example,
 we want to propose the following question:
 \begin{ques}
 On any closed symplectic $4$-manifold $(M,\omega)$, is there a $\omega$-compatible (or $\omega$-tame) almost complex structure $J$
 such that $\dim\ker P_J=b^2-1$ ?
 \end{ques}

 \begin{rem}
 On any closed symplectic $4$-manifold $(M,\omega)$, if an almost complex structure $J$ is tamed by symplectic form $\omega$ and $h^-_J=b^+-1$,
 then it implies that there exists the generalized $\partial\overline{\partial}$-lemma (cf. {\rm \cite{LejS2,TWZ} }).

 \end{rem}

  The above example $\mathbb{T}^4$ admits a K\"{a}hler structure $(g_0,J_0,\omega_0)$.
  In Section 3, we will give a non-K\"{a}hler example
  $M^6(c)$ which is constructed by M. Fern\'{a}ndez, V. Mu\~{n}oz and J. A. Santisteban.
  They have proven that $M^6(c)$ does not admit any K\"{a}hler metric (cf. \cite{FMS}).
  We will prove that there exists an almost complex structure $J$ on $M^6(c)$ such that $\dim\ker P_J=b^2-1$.
  Please see the Example \ref{example2}.

 \section{Primitive symplectic cohomology of degree two}\setcounter{equation}{0}

   L.-S. Tseng and S.-T. Yau \cite{TY} considered new symplectic cohomology groups
  \begin{eqnarray*}
    H^k_{d+d^\Lambda}(M)&=& \frac{{\rm Ker}(d+d^\Lambda)\cap\Omega^k(M)}{{\rm Im}(dd^\Lambda)\cap\Omega^k(M)}.
  \end{eqnarray*}
  and
  \begin{eqnarray*}
   H^k_{dd^\Lambda}(M)&=&  \frac{{\rm Ker}(dd^\Lambda)\cap\Omega^k(M)}{({\rm Im}\,d+{\rm Im}\,d^\Lambda)\cap\Omega^k(M)}
  \end{eqnarray*}
  on a compact symplectic manifold $(M,\omega)$ of dimension $2n$.
  We denote the spaces of $d+d^\Lambda$ harmonic $k$-forms and $dd^\Lambda$ harmonic $k$-forms by
  $\mathcal{H}^k_{d+d^\Lambda}(M)$ and $\mathcal{H}^k_{dd^\Lambda}(M)$, respectively.
  For any almost K\"{a}hler triple $(g,J,\omega)$,
  a $k$-form $\alpha\in\Omega^k(M)$ is said to be $d+d^\Lambda$-harmonic (see \cite{TY}) if
  \begin{eqnarray*}
  d\alpha=d^\Lambda\alpha=0 \quad \text{ and } \quad (dd^\Lambda)^*\alpha=0,
  \end{eqnarray*}
  and $dd^\Lambda$-harmonic (see \cite{TY}) if
  \begin{eqnarray*}
  d^*\alpha=(d^\Lambda)^*\alpha=0 \quad \text{ and } \quad dd^\Lambda\alpha=0,
  \end{eqnarray*}
  where $d^*=-*_gd*_g$, $d^{\Lambda*}=*_gd^\Lambda*_g$ and $(dd^\Lambda)^*=(-1)^{k+1}*_gdd^\Lambda*_g$.
  Tseng and Yau also proved that $\mathcal{H}^k_{d+d^\Lambda}(M)$ and
  $\mathcal{H}^k_{dd^\Lambda}(M)$ are finite dimensional and
  isomorphic to $H^k_{d+d^\Lambda}(M)$ and $H^k_{dd^\Lambda}(M)$, respectively.
  Let $\mathcal{H}^-_{d+d^\Lambda}(M)$ and $\mathcal{H}^-_{dd^\Lambda}(M)$ denote the spaces of primitive
  $d+d^\Lambda$ harmonic $2$-forms and primitive $dd^\Lambda$ harmonic $2$-forms, respectively.
  \begin{eqnarray*}
    \mathcal{H}^-_{d+d^\Lambda}(M)\triangleq\mathcal{H}^2_{d+d^\Lambda}(M)\cap\Omega^2_0, \,\,\,
    \mathcal{H}^-_{dd^\Lambda}(M)\triangleq\mathcal{H}^2_{dd^\Lambda}(M)\cap\Omega^2_0 .
  \end{eqnarray*}
  Hence, we can get the following decompositions
  \begin{eqnarray*}
    \mathcal{H}^2_{d+d^\Lambda}(M)=Span_{\mathbb{R}}\{\omega\}\oplus\mathcal{H}^-_{d+d^\Lambda}(M),
  \end{eqnarray*}
  \begin{eqnarray*}
    \mathcal{H}^2_{dd^\Lambda}(M)=Span_{\mathbb{R}}\{\omega\}\oplus\mathcal{H}^-_{dd^\Lambda}(M).
  \end{eqnarray*}

  \begin{defi}
  Let $(M,g,J,\omega)$ be a closed almost K\"{a}hler $4$-manifold.
  Set
  $$(\mathcal{H}^-_J\oplus\mathcal{H}_g^-)^{-,\perp}_{d+d^\Lambda}=\{\alpha\in\mathcal{H}^-_{d+d^\Lambda}\,\,\,|\,\,\,\alpha=d_J^-\theta^1+d^-_g\theta^2\}$$
  and $$(\mathcal{H}^-_J\oplus\mathcal{H}_g^-)^{-,\perp}_{dd^\Lambda}=\{\alpha\in\mathcal{H}^-_{dd^\Lambda}\,\,\,|\,\,\,\alpha=d_J^-\theta^1+d^-_g\theta^2\},$$
  where $d_J^-=P_J^-\circ d$, $d_g^-=P_g^-\circ d$ and $\theta^1,\theta^2\in\Omega^1(M)$.
  Here $P^-_J$ is the projection from $\Omega^2(M)$ to $\Omega^-_J(M)$ and $P^-_g$ is the projection from $\Omega^2(M)$ to $\Omega^-_g(M)$.
  \end{defi}

  \begin{theo}
   Suppose that $(M,g,J,\omega)$ is a closed almost K\"{a}hler $2n$-manifold, then
   $$\ker P_J=\mathcal{H}^-_{d+d^\Lambda}(M)\cap \mathcal{H}^-_{dd^\Lambda}(M).$$
   If $\mathcal{H}^-_{d+d^\Lambda}(M)=\mathcal{H}^-_{dd^\Lambda}(M)$, then
   $$\mathcal{H}^2_{d+d^\Lambda}(M)=\mathcal{H}^2_{dd^\Lambda}(M)=Span_{\mathbb{R}}\{\omega\}\oplus \mathcal{H}^-_J\oplus\mathcal{H}^+_{J,0}.$$
   In particular, if $n=2$, $$\mathcal{H}^-_{d+d^\Lambda}(M)=\mathcal{H}^-_J\oplus\mathcal{H}_g^-\oplus(\mathcal{H}^-_J\oplus\mathcal{H}_g^-)^{-,\perp}_{d+d^\Lambda},$$
   $$\mathcal{H}^-_{dd^\Lambda}(M)=\mathcal{H}^-_J\oplus\mathcal{H}_g^-\oplus(\mathcal{H}^-_J\oplus\mathcal{H}_g^-)^{-,\perp}_{dd^\Lambda},$$
   $$*_g(\mathcal{H}^-_J\oplus\mathcal{H}_g^-)^{-,\perp}_{d+d^\Lambda}=(\mathcal{H}^-_J\oplus\mathcal{H}_g^-)^{-,\perp}_{dd^\Lambda}.$$
  \end{theo}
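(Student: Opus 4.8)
The plan is to handle the three assertions in turn, reducing each to the description of $\ker P_J$ as the space of primitive $g$-harmonic $2$-forms (Lemma~\ref{P lemma}) together with the pure-degree splitting $\ker P_J=\mathcal{H}^-_J\oplus\mathcal{H}^+_{J,0}$ of Proposition~\ref{col kerP}. For the first identity I would prove the two inclusions separately. If $\alpha\in\mathcal{H}^-_{d+d^\Lambda}(M)\cap\mathcal{H}^-_{dd^\Lambda}(M)$, then $\alpha$ is primitive and the defining conditions of the two harmonicities give in particular $d\alpha=0$ and $d^*\alpha=0$; thus $\alpha$ is primitive $g$-harmonic, hence $\alpha\in\ker P_J$ by Lemma~\ref{P lemma}. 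For the reverse inclusion I take $\alpha\in\ker P_J$ and write $\alpha=\beta+\gamma$ with $\beta\in\mathcal{H}^-_J$, $\gamma\in\mathcal{H}^+_{J,0}$ as in Proposition~\ref{col kerP}, both closed and co-closed. Since $\alpha$ is primitive of degree two, Weil's identity gives $*_s\alpha=-\frac{L^{n-2}}{(n-2)!}\alpha$, and because $L$ commutes with $d$ one gets $d^\Lambda\alpha=-*_sd*_s\alpha=0$; consequently $dd^\Lambda\alpha=0$ and $(dd^\Lambda)^*\alpha=(d^\Lambda)^*d^*\alpha=0$. For the remaining condition I would use \eqref{Weil's relation}, which yields $*_g\alpha=\frac{L^{n-2}}{(n-2)!}(\beta-\gamma)$; applying Weil's identity once more gives $*_s*_g\alpha=\gamma-\beta$, a closed form, whence $d^\Lambda(*_g\alpha)=0$ and therefore $(d^\Lambda)^*\alpha=*_gd^\Lambda*_g\alpha=0$. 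Together with $d\alpha=d^*\alpha=0$ this places $\alpha$ in both harmonic spaces.

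The second assertion is then formal: from the displayed splittings $\mathcal{H}^2_{d+d^\Lambda}(M)=Span_{\mathbb{R}}\{\omega\}\oplus\mathcal{H}^-_{d+d^\Lambda}(M)$ and $\mathcal{H}^2_{dd^\Lambda}(M)=Span_{\mathbb{R}}\{\omega\}\oplus\mathcal{H}^-_{dd^\Lambda}(M)$, the hypothesis $\mathcal{H}^-_{d+d^\Lambda}(M)=\mathcal{H}^-_{dd^\Lambda}(M)$ makes each primitive piece equal to the intersection $\mathcal{H}^-_{d+d^\Lambda}(M)\cap\mathcal{H}^-_{dd^\Lambda}(M)=\ker P_J=\mathcal{H}^-_J\oplus\mathcal{H}^+_{J,0}$ computed above, and substituting gives both equalities.

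The dimension-four statement is where the work lies. First I would record the $n=2$ pointwise picture: $*_g$ acts as $+1$ on $\Omega^-_J$ and as $-1$ on the anti-self-dual forms, so $\Omega^-_J\subseteq\Omega^+_g$ and $\Omega^+_{J,0}=\Omega^-_g$, whence $\Omega^2_0=\Omega^-_J\oplus\Omega^-_g$ and $\ker P_J=\mathcal{H}^-_J\oplus\mathcal{H}^-_g$. Given $\alpha\in\mathcal{H}^-_{d+d^\Lambda}(M)$, I project it orthogonally onto the finite-dimensional $\ker P_J$, writing $\alpha=\beta+\alpha'$ with $\beta\in\ker P_J$ and $\alpha'\in\mathcal{H}^-_{d+d^\Lambda}(M)$ orthogonal to $\ker P_J$. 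Splitting $\alpha'=\alpha'_1+\alpha'_2$ along $\Omega^-_J\oplus\Omega^-_g$, orthogonality to $\mathcal{H}^-_J$ and to $\mathcal{H}^-_g$ separately forces $\alpha'_1\perp\mathcal{H}^-_J$ in $\Omega^-_J$ and $\alpha'_2\perp\mathcal{H}^-_g$ in $\Omega^-_g$. I then invoke the Hodge-type decompositions $\Omega^-_J=\mathcal{H}^-_J\oplus\operatorname{Im}d^-_J$ and $\Omega^-_g=\mathcal{H}^-_g\oplus\operatorname{Im}d^-_g$ --- the former from the strong ellipticity of $P_J|_{\Omega^-_J}=2d^-_Jd^*$ with kernel $\mathcal{H}^-_J$ (Lejmi, cf.\ \cite{LejS1}), the latter the classical anti-self-dual Hodge decomposition --- to conclude $\alpha'_1=d^-_J\theta^1$ and $\alpha'_2=d^-_g\theta^2$, so that $\alpha'\in(\mathcal{H}^-_J\oplus\mathcal{H}_g^-)^{-,\perp}_{d+d^\Lambda}$; the identical argument handles $\mathcal{H}^-_{dd^\Lambda}(M)$.

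For the final $*_g$-correspondence I would check that $*_g$ interchanges the two symplectic harmonicities on primitive $2$-forms: using $*_g*_g=1$ together with $d^*=-*_gd*_g$, $(d^\Lambda)^*=*_gd^\Lambda*_g$ and $(dd^\Lambda)^*=-*_gdd^\Lambda*_g$, one reads off directly that $\alpha\in\mathcal{H}^-_{d+d^\Lambda}(M)$ iff $*_g\alpha\in\mathcal{H}^-_{dd^\Lambda}(M)$. Since $*_g$ preserves $\Omega^2_0=\Omega^-_J\oplus\Omega^-_g$, acting as $+1$ on the first summand and $-1$ on the second, it sends $d^-_J\theta^1+d^-_g\theta^2$ to $d^-_J\theta^1+d^-_g(-\theta^2)$, a form of the same type, and combining the two facts yields $*_g(\mathcal{H}^-_J\oplus\mathcal{H}_g^-)^{-,\perp}_{d+d^\Lambda}=(\mathcal{H}^-_J\oplus\mathcal{H}_g^-)^{-,\perp}_{dd^\Lambda}$. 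I expect the main obstacle to be the surjectivity step in the four-dimensional decomposition --- showing that the component of a primitive $d+d^\Lambda$-harmonic form orthogonal to $\ker P_J$ is genuinely of the form $d^-_J\theta^1+d^-_g\theta^2$ --- which hinges on correctly identifying the cokernels of the first-order operators $d^-_J$ and $d^-_g$ inside $\Omega^-_J$ and $\Omega^-_g$ with $\mathcal{H}^-_J$ and $\mathcal{H}^-_g$ respectively.
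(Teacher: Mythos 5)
Your proposal is correct and follows essentially the same route as the paper: the same Weil-identity computations show $\ker P_J\subset\mathcal{H}^-_{d+d^\Lambda}(M)\cap\mathcal{H}^-_{dd^\Lambda}(M)$ with the converse read off from the definitions, the second assertion is handled by the same formal substitution, and in dimension four you invoke exactly the paper's two ingredients --- Lejmi's elliptic operator $2d^-_Jd^*$ on $\Omega^-_J$ and the classical anti-self-dual Hodge decomposition of $\Omega^-_g$ --- together with the Tseng--Yau $*_g$-duality between the two harmonic spaces. The only cosmetic difference is that you decompose the primitive space $\mathcal{H}^-_{d+d^\Lambda}(M)$ directly by orthogonal projection onto $\ker P_J$, whereas the paper starts from a general $\alpha\in\mathcal{H}^2_{d+d^\Lambda}(M)$ and first splits off the $Span_{\mathbb{R}}\{\omega\}$ component via $\frac{1}{2}(1+*_s)\alpha=c_\alpha\omega$; both then reduce to the identical decomposition of the remaining piece in $\Omega^-_J\oplus\Omega^-_g$.
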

  \begin{proof}
   Let us begin with the first assertion of the Theorem.
   We claim that $\mathcal{H}^-_J\oplus\mathcal{H}^+_{J,0}\subset\mathcal{H}^-_{d+d^\Lambda}(M)$
   and $\mathcal{H}^-_J\oplus\mathcal{H}^+_{J,0}\subset\mathcal{H}^-_{dd^\Lambda}(M)$.
   Indeed, for any $\alpha=\beta+\gamma\in\mathcal{H}^-_J\oplus\mathcal{H}^+_{J,0}$,
   $\beta\in\mathcal{H}^-_J$, $\gamma\in\mathcal{H}^+_{J,0}$.
   By Weil's identity, we have
   $*_s\alpha=-\frac{L^{n-2}}{(n-2)!}\alpha$.
   Then
   $$
   d*_s\alpha=-d\frac{L^{n-2}}{(n-2)!}\alpha=-\frac{L^{n-2}}{(n-2)!}d\alpha=0.
   $$
   Hence, $d^\Lambda\alpha=0$.
   Also by Weil's identity, we have
   $$
   *_s*_g\alpha=*_s*_g\beta+*_s*_g\gamma=*_s\frac{L^{n-2}}{(n-2)!}\beta-*_s\frac{L^{n-2}}{(n-2)!}\gamma=-\beta+\gamma.
   $$
   Hence, $d*_s*_g\alpha=d(-\beta+\gamma)=0$.
   So we get $(dd^\Lambda)^*\alpha=0$.
   Therefore, $\alpha\in\mathcal{H}^-_{d+d^\Lambda}(M)$ and $\mathcal{H}^-_J\oplus\mathcal{H}^+_{J,0}\subset\mathcal{H}^-_{d+d^\Lambda}(M)$.
   It is similar for $\mathcal{H}^-_J\oplus\mathcal{H}^+_{J,0}\subset\mathcal{H}^-_{dd^\Lambda}(M)$.
   In particular, if $n=2$, $\mathcal{H}^+_{J,0}=\mathcal{H}^-_g$, we can get
   $\mathcal{H}^-_J\oplus\mathcal{H}^-_g\subset\mathcal{H}^-_{d+d^\Lambda}(M)$
   and $\mathcal{H}^-_J\oplus\mathcal{H}^-_g\subset\mathcal{H}^-_{dd^\Lambda}(M)$.
   So we can get
   $$\mathcal{H}^-_J\oplus\mathcal{H}^+_{J,0}\subset\mathcal{H}^-_{d+d^\Lambda}(M)\cap\mathcal{H}^-_{dd^\Lambda}(M).$$
   For the other hand, it follows straightforwardly from the definitions of
   $\mathcal{H}^-_{d+d^\Lambda}(M)$, $\mathcal{H}^-_{dd^\Lambda}(M)$ and $\ker P_J$.
   If $\mathcal{H}^-_{d+d^\Lambda}(M)=\mathcal{H}^-_{dd^\Lambda}(M)$
   (i.e. $\mathcal{H}^2_{d+d^\Lambda}(M)=\mathcal{H}^2_{dd^\Lambda}(M)$), then $\ker P_J=\mathcal{H}^-_{d+d^\Lambda}(M)=\mathcal{H}^-_{dd^\Lambda}(M)$.
   Hence, $$\mathcal{H}^2_{d+d^\Lambda}(M)=\mathcal{H}^2_{dd^\Lambda}(M)=Span_{\mathbb{R}}\{\omega\}\oplus \ker P_J=Span_{\mathbb{R}}\{\omega\}\oplus \mathcal{H}^-_J\oplus\mathcal{H}^+_{J,0}.$$

   In the following, we suppose that $(M,g,J,\omega)$ is a closed almost K\"{a}hler $4$-manifold.
   Then it is easy to see that $\Omega^-_g=\Omega^+_{J,0}$.
   So $\Omega^2$ can be written as
   $$\Omega^2=\Omega^2_1\oplus\Omega^2_0=\Omega^2_1\oplus\Omega^-_g\oplus\Omega^-_J.$$
   For any $\alpha\in\mathcal{H}^2_{d+d^\Lambda}$,
   by the definition of $\mathcal{H}^2_{d+d^\Lambda}$,
   $$
   d\alpha=d^\Lambda\alpha=0,\,\,\,(dd^\Lambda)^*\alpha=0,
   $$
   where $(dd^\Lambda)^*=-*_gdd^\Lambda*_g$.
   It is clear that
  \begin{equation*}
  d\alpha=0, \,\,\,d*_s\alpha=0.
  \end{equation*}
  Hence
  $$
  d\frac{1}{2}(1+*_s)\alpha=0.
  $$
  Since $\frac{1}{2}(1+*_s)\alpha\in \Omega^2_1$, it can be written as
  $$
  \frac{1}{2}(1+*_s)\alpha=f_\alpha\omega.
  $$
  Since $d\omega=0$, we have $d(f_\alpha\omega)=df_\alpha\wedge\omega=0$.
  It follows that $f_\alpha=c_\alpha$ is a constant since $\omega$ is nondegenerate.

  Let
  $$
  \alpha_1=\alpha-c_\alpha\omega=\frac{1}{2}(1-*_s)\alpha\in\Omega^2_0=\Omega^-_J\oplus\Omega^-_g.
  $$
  Hence, $\alpha_1$ is still in $\mathcal{H}^2_{d+d^\Lambda}$ and $\alpha_1$ can be written as
  $$
  \alpha_1=\alpha_{1,J}^-+\alpha_{1,g}^-,
  $$
  where $\alpha_{1,J}^-\in\Omega^-_J$ and $\alpha_{1,g}^-\in\Omega^-_g$.
  By Lejmi lemma \cite{LejS1} and Hodge decomposition \cite{DKT},
  $$
  \alpha_{1,J}^-=\beta_\alpha+d_J^-d^*\eta_\alpha=\beta_\alpha+d_J^-\theta^1_\alpha,\,\,\,\theta^1_\alpha=d^*\eta_\alpha
  $$
  and
  $$
  \alpha_{1,g}^-=\gamma_\alpha+d_g^-d^*\xi_\alpha=\gamma_\alpha+d_g^-\theta^2_\alpha,\,\,\,\theta^2_\alpha=d^*\xi_\alpha,
  $$
  where $\beta_\alpha\in \mathcal{Z}_J^-=\mathcal{H}_J^-$, $\gamma_\alpha\in \mathcal{H}^-_g$,
  $\eta_\alpha\in\Omega^-_J$ and $\xi_\alpha\in\Omega^-_g$.
  Then $$\alpha=c_\alpha\omega+\alpha_1=c_\alpha\omega+\beta_\alpha+\gamma_\alpha+(d_J^-\theta^1_\alpha+d_g^-\theta^2_\alpha)$$
  and $(d_J^-\theta^1_\alpha+d_g^-\theta^2_\alpha)\in(\mathcal{H}^-_J\oplus\mathcal{H}_g^-)^{-,\perp}_{d+d^\Lambda}$.
  So we can get that
  $$
  \mathcal{H}^2_{d+d^\Lambda}=Span_{\mathbb{R}}\{\omega\}\oplus \mathcal{H}_J^-\oplus \mathcal{H}^-_g\oplus(\mathcal{H}^-_J\oplus \mathcal{H}^-_g)^{-,\perp}_{d+d^\Lambda}.
  $$
  It is easy to see that
  $$
 (\mathcal{H}^-_J\oplus\mathcal{H}_g^-)^{-,\perp}_{d+d^\Lambda}\subset\mathcal{H}^-_{d+d^\Lambda}
  $$
  is just the orthogonal complement of $\mathcal{H}_J^-\oplus\mathcal{H}^-_g$ in $\mathcal{H}^-_{d+d^\Lambda}$ with respect to the cup product.
  Similarly, one has
  $$
  \mathcal{H}^2_{dd^\Lambda}=Span_{\mathbb{R}}\{\omega\}\oplus \mathcal{H}_J^-\oplus \mathcal{H}^-_g\oplus(\mathcal{H}^-_J\oplus \mathcal{H}^-_g)^{-,\perp}_{dd^\Lambda},
  $$
  since $*_g\mathcal{H}^2_{d+d^\Lambda}=\mathcal{H}^2_{dd^\Lambda}$
  and $*_g\mathcal{H}^-_{d+d^\Lambda}=\mathcal{H}^-_{dd^\Lambda}$ (see \cite[Proposition 3.24]{TY}).
  This completes the proof of the Theorem.
  \end{proof}

  It is helpful to have explicit examples showing clearly the differences between the different cohomologies and $\ker P_J$
  discussed above. For this we consider the following examples.
  \begin{exa}\label{example2}
  {\rm (cf. \cite{FMS})
  Let $G(c)$ be the connected completely solvable Lie group of dimension $5$ consisting of matrices of the form
  \begin{equation}
     a=\left(
     \begin{array}{cccccc}
       e^{cz} & 0 & 0 & 0 & 0 & x_1 \\
       0 & e^{cz} & 0 & 0 & 0 & y_1 \\
       0 & 0 & e^{cz} & 0 & 0 & x_2 \\
       0 & 0 & 0 & e^{cz} & 0 & y_2 \\
       0 & 0 & 0 & 0 & 1 & z \\
       0 & 0 & 0 & 0 & 0 & 1 \\
     \end{array}
    \right),
   \end{equation}
   where $x_i,\,\,y_i,\,\,z\in\mathbb{R}$ $(i=1,2)$ and $c$ is a nonzero real number.
   Then a global system of coordinates $x_1,\,\,y_1,\,\,x_2,\,\,y_2$ and $z$ for $G(c)$ is given by
   $x_i(a)=x_i$, $y_i(a)=y_i$ and $z(a)=z$.
   A standard calculation shows that a basis for the right invariant $1$-forms on $G(c)$ consists of
   \begin{equation}
     \{dx_1-cx_1dz,\,\,dy_1-cy_1dz,\,\,dx_2-cx_2dz,\,\,dy_2-cy_2dz,\,\,dz\}.
   \end{equation}
   Alternatively, the Lie group $G(c)$ may be described as a semidirect product $G(c)=\mathbb{R}\ltimes_{\psi}\mathbb{R}^4$,
   where $\psi(z)$ is the linear transformation of $\mathbb{R}^4$ given by the matrix
   \begin{equation}
    \left(
      \begin{array}{cccc}
        e^{cz} & 0 & 0 & 0 \\
        0 & e^{-cz}& 0 & 0 \\
        0 & 0 & e^{cz} & 0 \\
        0 & 0 & 0 & e^{-cz} \\
      \end{array}
    \right),
   \end{equation}
   for any $z\in\mathbb{R}$.
   Thus, $G(c)$ has a discrete subgroup $\Gamma(c)=\mathbb{Z}\ltimes_{\psi}\mathbb{Z}^4$ such that the quotient
   space $G(c)/\Gamma(c)$ is compact. Therefore, the forms
   $dx_i-cx_idz$, $dy_i-cy_idz$ and $dz$ $(i=1,2)$ descend to $1$-forms $\alpha_i$, $\beta_i$ and $\gamma$ $(i=1,2)$
   on $G(c)/\Gamma(c)$.

   M. Fern\'{a}ndez, V. Mu\~{n}oz and J. A. Santisteban considered the manifold $M^6(c)=G(c)/\Gamma(c)\times S^1$.
   Here, there are $1$-forms $\alpha_1$, $\beta_1$, $\alpha_2$, $\beta_2$, $\gamma$ and $\eta$ on $M^6(c)$ such that
   \begin{equation}
   d\alpha_i=-c\alpha_i\wedge\gamma,\,\,\,d\beta_i=-c\beta_i\wedge\gamma,\,\,\,d\gamma=d\eta=0,
   \end{equation}
   where $i=1,2$ and such that at each point of $M^6(c)$, $\{\alpha_1, \beta_1, \alpha_2, \beta_2, \gamma,\eta\}$
   is a basis for the $1$-forms on $M^6(c)$.
   Using Hattori's theorem, they compute the real cohomology of $M^6(c)$:
   \begin{eqnarray}
    H^0(M^6(c)) &=& \langle 1\rangle, \nonumber\\
     H^1(M^6(c)) &=& \langle [\gamma],\,\,[\eta]\rangle, \nonumber\\
     H^2(M^6(c)) &=& \langle    [\alpha_1\wedge\beta_1],\,\,[\alpha_1\wedge\beta_2],\,\,[\alpha_2\wedge\beta_1],\,\,[\alpha_2\wedge\beta_2],\,\,[\gamma\wedge\eta]\rangle, \nonumber\\
     H^3(M^6(c)) &=& \langle[\alpha_1\wedge\beta_1\wedge\gamma],\,\,[\alpha_1\wedge\beta_2\wedge\gamma],\,\,[\alpha_2\wedge\beta_1\wedge\gamma],\,\,
                     [\alpha_2\wedge\beta_2\wedge\gamma], \nonumber\\
                 && [\alpha_1\wedge\beta_1\wedge\eta],\,\,[\alpha_1\wedge\beta_2\wedge\eta],\,\,[\alpha_2\wedge\beta_1\wedge\eta],\,\,
                     [\alpha_2\wedge\beta_2\wedge\eta]\rangle, \nonumber\\
     H^4(M^6(c)) &=& \langle [\alpha_1\wedge\beta_1\wedge\alpha_2\wedge\beta_2],\,\,[\alpha_1\wedge\beta_1\wedge\gamma\wedge\eta],\,\,
                       [\alpha_1\wedge\beta_2\wedge\gamma\wedge\eta], \nonumber\\
                 && [\alpha_2\wedge\beta_1\wedge\gamma\wedge\eta],\,\,[\alpha_2\wedge\beta_2\wedge\gamma\wedge\eta] \rangle,\nonumber\\
     H^5(M^6(c))&=& \langle [\alpha_1\wedge\beta_1\wedge\alpha_2\wedge\beta_2\wedge\gamma],\,\,
     [\alpha_1\wedge\beta_1\wedge\alpha_2\wedge\beta_2\wedge\eta]\rangle, \nonumber\\
     H^6(M^6(c))&=& \langle [\alpha_1\wedge\beta_1\wedge\alpha_2\wedge\beta_2\wedge\gamma\wedge\eta]\rangle.
   \end{eqnarray}
   Therefore, the Betti number of $M^6(c)$ are
   \begin{eqnarray}
     b^0 &=& b^6=1,\nonumber\\
     b^1 &=& b^5=2,\nonumber \\
     b^2 &=& b^4=5, \nonumber\\
     b^3 &=& 8 .
   \end{eqnarray}
   We denote by $(g,J,\omega)$ be an almost K\"{a}hler structure on $M^6(c)$, where we choose
   $$g=\alpha_1\otimes\alpha_1+\beta_1\otimes\beta_1+\alpha_2\otimes\alpha_2+\beta_2\otimes\beta_2+\gamma\otimes\gamma+\eta\otimes\eta$$
   and
   $$\omega=\alpha_1\wedge\beta_1+\alpha_2\wedge\beta_2+\gamma\wedge\eta.$$
   So $J$ is given by
   $$
   J\alpha_1=\beta_1,\,\,\,J\alpha_2=\beta_2,\,\,\,J\gamma=\eta.
   $$
   It is clear that the maps
   $$[\omega]:H^2_{dR}(M^6(c);\mathbb{R})\rightarrow H^4_{dR}(M^6(c);\mathbb{R})$$
   and
   $$[\omega]^2:H^1_{dR}(M^6(c);\mathbb{R})\rightarrow H^5_{dR}(M^6(c);\mathbb{R})$$
   are isomorphisms.
   Thus, $(M^6(c),\omega)$ satisfies the hard Lefschetz property.
   By simple calculation, we can get
   \begin{eqnarray}
     \mathcal{Z}^-_J &=& Span_{\mathbb{R}}\{\alpha_1\wedge\beta_2-\alpha_2\wedge\beta_1\},  \\
    \mathcal{Z}^+_J &=& Span_{\mathbb{R}}\{\alpha_1\wedge\beta_2+\alpha_2\wedge\beta_1,\,\,\alpha_1\wedge\beta_1,\,\,
                  \alpha_2\wedge\beta_2,\,\,\gamma\wedge\eta\},  \\
     \ker P_J&=& Span_{\mathbb{R}}\{\alpha_1\wedge\beta_2-\alpha_2\wedge\beta_1,\,\,\alpha_1\wedge\beta_2+\alpha_2\wedge\beta_1,  \nonumber \\
             &&  \alpha_1\wedge\beta_1-\gamma\wedge\eta,\,\,\alpha_2\wedge\beta_2-\gamma\wedge\eta   \}.
   \end{eqnarray}
   Hence, $\dim\ker P_J=4=b^2-1$. Of course, $J$ is $C^\infty$ pure and full.

   \begin{center}
  \begin{tabular}{|c|l|}
    \hline
   $H^2_{dR}$ &  $\alpha_1\wedge\beta_1,\,\,\alpha_1\wedge\beta_2,\,\,\alpha_2\wedge\beta_1,\,\,\alpha_2\wedge\beta_2,\,\,\gamma\wedge\eta$       \\
    \hline
   $\mathcal{Z}^+_J$ &  $\alpha_1\wedge\beta_2+\alpha_2\wedge\beta_1,\,\,\alpha_1\wedge\beta_1,\,\, \alpha_2\wedge\beta_2,\,\,\gamma\wedge\eta$       \\
   \hline
   $\mathcal{Z}^-_J$ &  $\alpha_1\wedge\beta_2-\alpha_2\wedge\beta_1$              \\
   \hline
   $\mathcal{H}^-_{d+d^\Lambda}$ & $\alpha_1\wedge\beta_2-\alpha_2\wedge\beta_1,\,\, \alpha_1\wedge\beta_2+\alpha_2\wedge\beta_1,
                         \,\, \alpha_1\wedge\beta_1-\gamma\wedge\eta,\,\, \alpha_2\wedge\beta_2-\gamma\wedge\eta$     \\
   \hline
   $\mathcal{H}^-_{dd^\Lambda}$ & $\alpha_1\wedge\beta_2-\alpha_2\wedge\beta_1,\,\, \alpha_1\wedge\beta_2+\alpha_2\wedge\beta_1,
                         \,\, \alpha_1\wedge\beta_1-\gamma\wedge\eta,\,\, \alpha_2\wedge\beta_2-\gamma\wedge\eta$     \\
   \hline
   $\ker P_J$ &   $\alpha_1\wedge\beta_2-\alpha_2\wedge\beta_1,\,\,\alpha_1\wedge\beta_2+\alpha_2\wedge\beta_1,
                         \alpha_1\wedge\beta_1-\gamma\wedge\eta,\,\,\alpha_2\wedge\beta_2-\gamma\wedge\eta $      \\
   \hline
  \end{tabular}
  \end{center}
  \begin{center}
  {\bf Table 2.} Bases for $H^2_{dR}$, $\mathcal{Z}^+_J$, $\mathcal{Z}^-_J$, $\mathcal{H}^-_{d+d^\Lambda}$, $\mathcal{H}^-_{dd^\Lambda}$ and $\ker P_J$ of $M^6(c)$.
  \end{center}
  By the above table, we can see that $\mathcal{H}^-_{d+d^\Lambda}(M^6(c))=\mathcal{H}^-_{dd^\Lambda}(M^6(c))$.
  So
  $$\mathcal{H}^2_{d+d^\Lambda}(M^6(c))=\mathcal{H}^2_{dd^\Lambda}(M^6(c))=Span_{\mathbb{R}}\{\omega\}\oplus \mathcal{H}^-_J\oplus\mathcal{H}^+_{J,0}.$$
   }
   \end{exa}

  \begin{prop}
  (cf. \cite{FMS})
  The manifold $M^6(c)$ does not admit K\"{a}hler metric.
  \end{prop}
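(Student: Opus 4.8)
The plan is to notice first that none of the classical numerical obstructions can detect non-K\"ahlerness here, so one must exploit the structure of $M^6(c)$ as a solvmanifold rather than any cohomological count. Indeed, as recorded in Example~\ref{example2}, the odd Betti numbers $b^1=b^5=2$ and $b^3=8$ are all even, and $(M^6(c),\omega)$ satisfies the hard Lefschetz property; thus the parity of the odd Betti numbers and the Lefschetz condition, the two obstructions one would try first, are both satisfied. This is precisely what makes the example delicate, and it forces a finer argument.

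My main approach is to invoke the classification of Kähler solvmanifolds. Write $M^6(c)=\bigl(G(c)/\Gamma(c)\bigr)\times S^1=\widetilde G/\widetilde\Gamma$, where $\widetilde G=G(c)\times\mathbb R$ and $\widetilde\Gamma=\Gamma(c)\times\mathbb Z$. Since $G(c)$ is connected, simply connected and completely solvable, and $\mathbb R$ is abelian, $\widetilde G$ is again a connected, simply connected, completely solvable Lie group, so $M^6(c)$ is a compact completely solvable solvmanifold. By Hasegawa's theorem a compact solvmanifold admits a Kähler structure only if it is a finite quotient of a complex torus, and for a completely solvable solvmanifold this forces the group $\widetilde G$ to be abelian (equivalently, $M^6(c)$ to be flat, a torus up to finite cover). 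I would then read off the contradiction directly from the structure equations: because $d\alpha_i=-c\,\alpha_i\wedge\gamma$ with $c\neq0$, the Lie algebra of $\widetilde G$ has a nonzero bracket and is non-abelian, so $M^6(c)$ cannot be a finite quotient of a complex torus, and hence carries no Kähler metric.

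As a cross-check at the level of the fundamental group, I would note that $\pi_1(M^6(c))\cong(\mathbb Z\ltimes_\psi\mathbb Z^4)\times\mathbb Z$ is polycyclic, while the monodromy $\psi(1)$ has real eigenvalues $e^{\pm c}\neq1$ off the unit circle; hence this group has exponential growth and is not virtually abelian, and by the theorem that a solvable (polycyclic) Kähler group must be virtually abelian it is not a Kähler group. Finally I would contrast both routes with the original argument of Fern\'andez--Mu\~noz--Santisteban \cite{FMS}, who deduce the same conclusion from non-formality. The main obstacle in that formality route, and the reason I would not take it, is that the natural witnesses vanish: computing on the invariant Chevalley--Eilenberg model (legitimate by Hattori's theorem, since $\widetilde G$ is completely solvable), the homogeneous triple Massey products of invariant closed forms that one would first test all reduce either to products with a repeated factor or to products involving an exact, hence cohomologically trivial, entry, so they lie in the indeterminacy. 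Any non-formality is therefore genuinely higher order and awkward to exhibit by hand, which is exactly why I would present Hasegawa's structural theorem as the heart of the proof.
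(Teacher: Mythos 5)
The paper offers no argument of its own here: the proposition is stated with a bare citation to \cite{FMS}, so the only meaningful comparison is with the proof in that reference. Your main route (Hasegawa's structure theorem for K\"ahler solvmanifolds, plus the observation that $d\alpha_i=-c\,\alpha_i\wedge\gamma$ with $c\neq 0$ makes the Lie algebra non-abelian, so $M^6(c)$ is not a finite quotient of a complex torus) is correct and self-contained, and in fact what you present as a ``cross-check'' --- the fundamental group $\pi_1(M^6(c))\cong(\mathbb{Z}\ltimes_\psi\mathbb{Z}^4)\times\mathbb{Z}$ is polycyclic with hyperbolic monodromy $e^{\pm c}$, hence of exponential growth, hence not the fundamental group of any compact K\"ahler manifold --- is essentially the argument actually used by Fern\'andez--Mu\~noz--Santisteban. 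What the Hasegawa route buys is a cleaner structural statement (and it covers the completely solvable case via the Benson--Gordon conjecture); what the fundamental-group route buys is historical accuracy and independence from the later structure theorem, since Hasegawa's result postdates \cite{FMS}.

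Two inaccuracies should be fixed, though neither sinks the proof. First, your closing contrast misattributes the argument of \cite{FMS}: they do \emph{not} deduce non-K\"ahlerness from non-formality --- on the contrary, the entire point of their title ``cohomologically K\"ahler manifolds with no K\"ahler metrics'' is that $M^6(c)$ \emph{is} formal, satisfies hard Lefschetz, and has even odd Betti numbers, so no cohomological or formality obstruction can apply; they conclude via the fundamental group, exactly your second route. Your speculation that ``any non-formality is genuinely higher order'' is therefore moot: there is none. Second, the group-theoretic theorem you invoke is overstated: the known result (Arapura--Nori for polycyclic groups, Delzant for solvable ones) is that such a K\"ahler group must be virtually \emph{nilpotent}, not virtually abelian. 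Your argument survives unchanged, because exponential growth already rules out virtual nilpotence (nilpotent groups have polynomial growth), but the statement should be corrected before the step can be cited.
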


  \begin{exa}
  {\rm (cf. \cite{TY})
  Let $M$ be the Kodaira-Thurston nilmanifold defined by taking $\mathbb{R}^4$ and modding out by the identification
   $$(x_1,x_2,x_3,x_4)\sim (x_1+a,x_2+b,x_3+c,x_4+d-bx_3),$$
   where $a,b,c,d\in \mathbb{Z}$.
   The resulting manifold is a torus bundle over a torus with a basis of cotangent $1$-forms given by
   $$e_1=dx_1,\,\,\,e_2=dx_2,\,\,\,e_3=dx_3,\,\,\,e_4=dx_4+x_2dx_3.$$
   It is well known that Kodaira-Thurston manifold admits no K\"{a}hler structure.
   We take the symplectic form to be
   $$\omega=e_1\wedge e_2+e_3\wedge e_4.$$
   Consider the $\omega$-compatible almost complex structure $J$ given by
   $$J(e_1)=e_2,\,\,\,J(e_2)=-e_1,\,\,\,J(e_3)=e_4,\,\,\,J(e_4)=-e_3.$$
   Let us denote the compatible metric by
   $$g(\cdot,\cdot)=\omega(\cdot,J\cdot)=e_1\otimes e_1+e_2\otimes e_2+e_3\otimes e_3+e_4\otimes e_4.$$
   $(g,J,\omega)$ is an almost K\"{a}hler structure but not K\"{a}hler since the almost complex structure $J$ is not integrable.
   Please see the following table for the relationship between $H^2_{dR}$, $H^2_{d+d^\Lambda}$, $H^2_{dd^\Lambda}$ and $\ker P_J$.
   \begin{center}
  \begin{tabular}{|c|l|}
   \hline

   $H^2_{dR}$ & $\omega$, \,\,\, $e_1\wedge e_2-e_3\wedge e_4$, \,\,\, $e_1\wedge e_3$,  \,\,\, $e_2\wedge e_4$   \\

   \hline

    $\mathcal{Z}^+_J$ & $\omega$, \,\,\, $e_1\wedge e_2-e_3\wedge e_4$    \\

   \hline

   $\mathcal{Z}^-_J$ &  $e_1\wedge e_3$,  \,\,\, $e_2\wedge e_4$   \\

   \hline

   $H^2_{d+d^\Lambda}$ & $\omega$, \,\,\, $e_1\wedge e_2-e_3\wedge e_4$, \,\,\, $e_1\wedge e_3$, \,\,\, $e_2\wedge e_4$, \,\,\, $e_2\wedge e_3$  \\

   \hline
   $H^2_{dd^\Lambda}$ & $\omega$, \,\,\, $e_1\wedge e_2-e_3\wedge e_4$, \,\,\, $e_1\wedge e_3$,  \,\,\, $e_2\wedge e_4$, \,\,\, $e_1\wedge e_4$ \\

   \hline
   $\ker P_J$ &   $e_1\wedge e_2-e_3\wedge e_4$, \,\,\, $e_1\wedge e_3$,  \,\,\, $e_2\wedge e_4$     \\
   \hline
  \end{tabular}
  \end{center}
  \begin{center}
  {\bf Table 3.} Bases for $H^2_{dR}$, $\mathcal{Z}^+_J$, $\mathcal{Z}^-_J$, $H^2_{d+d^\Lambda}$, $H^2_{dd^\Lambda}$ and $\ker P_J$ of $M$.
  \end{center}
  By the above table, we can see that $(M,\omega)$ does not satisfy the hard Lefschetz property.
  It is easy to see that the dimension of $\ker P_J$ is equal to $b^2-1=3$.
  }
  \end{exa}

 \vskip 12pt

 \noindent{\bf Acknowledgements.}\,
 The authors would like to thank T. Draghici for stimulating email discussions and providing example.
 The second author would like to thank Fudan University and Jiaxing Hong
 for hosting his visit in the fall semester in 2013.

  \vskip 24pt

  \noindent Qiang Tan\\
  School of Mathematical Sciences, Yangzhou University, Yangzhou, Jiangsu 225002, China\\
  e-mail: tanqiang1986@hotmail.com\\

  \vskip 6pt

  \noindent Hongyu Wang\\
  School of Mathematical Sciences, Yangzhou University, Yangzhou, Jiangsu 225002, China\\
  e-mail: hywang@yzu.edu.cn\\

  \vskip 6pt

  \noindent Jiuru Zhou\\
  School of Mathematical Sciences, Yangzhou University, Yangzhou, Jiangsu 225002, China\\
  e-mail: zhoujr1982@hotmail.com

 \end{document}